\pgfplotsset{compat=1.12}
\pgfplotsset{compat=1.17}
\newtheorem{theorem}{Theorem}
\newtheorem{corollary}[theorem]{Corollary}
\newtheorem{definition}[theorem]{Definition}
\newenvironment{proof}[1][Proof]{\noindent\textbf{#1.} }{\ \rule{0.5em}{0.5em}}
           \edef\temp{\noexpand\pgfkeyssetvalue{/pgfplots/table/@cell content}{%
            \noexpand\cellcolor{gray}{n/a}%
          }%
        }%
        \let\value=\pgfmathresult%
        \edef\temp{\noexpand\pgfkeyssetvalue{/pgfplots/table/@cell content}{%
            \noexpand\cellcolor[rgb]{\pgfmathresult}%
            \noexpand\definecolor{mapped color}{rgb}{\pgfmathresult}%
            \ifx\textcolorvalue\empty\else\noexpand\color{\textcolorvalue}\fi%
            \the\toks0%
          }%
        }%
\begin{document}

\title{Analyzing Smoothness and Dynamics in an SEIR$^{\text{T}}$R$^{\text{P}}$D
Endemic Model with Distributed Delays}
%\author{Tin Nwe Aye\\tinnweaye211@gmail.com
%\and Linus Carlsson}
\author[1]{Tin Nwe Aye}
\author[2]{Linus Carlsson}

\affil[1]{Department of Mathematics, Linn\'{e}universitetet, Växjö, Sweden}
\affil[2]{Division of Applied Mathematics, Mälardalen University, Box 883, 721 23, Västerås, Sweden}
\maketitle

\begin{abstract}
This article explores the properties of an SEIR$^{\text{T}}$R$^{\text{P}}$D
endemic model expressed through delay-differential equations with distributed
delays for latency and temporary immunity. Our research delves into the
variability of latent periods and immunity durations across diseases, in
particular, we introduce a class of delays defined by continuous integral
kernels with compact support. The main result of the paper is a kind of
smoothening property which the solution function posesses under mild
conditions of the system parameter functions. Also, boundedness and
non-negativity is proved. Numerical simulations indicates that the continuous
model can be approximated with a discrete lag endemic models. The study
contributes to understanding infectious disease dynamics and provides insights
into the numerical approximation of exact solution for different delay scenarios.

\end{abstract}

\section{Introduction}

The SEIR$^{\text{T}}$R$^{\text{P}}$D endemic model was initially proposed by
\cite{disease2024} to address epidemics spread through contact with infectious
individuals, incorporating single time delays for the latency period and
temporary immunity. In this paper, our focus is on analyzing the properties of
an SEIR$^{\text{T}}$R$^{\text{P}}$D endemic model expressed through
delay-differential equations, incorporating distributed delays for both
latency and temporary immunity, as opposed to the original single time delays.
Several research articles have explored similar realistic phenomenas,
incorporating discrete or distributed time delays for both discrete and
continuous models \cite{guglielmi2022delay, zhu2020dynamics,
shayak2020temporary, wei2008delayed, zhang2008analysis, beretta1995global}.

In this study, we develop and modify the SEIR$^{\text{T}}$R$^{\text{P}}$D
endemic model proposed by \cite{disease2024} to analyze the properties of the
solution within a space of continuous functions. Emphasizing the variability
in the duration of latent periods and temporary immunity among various
diseases, which has been confirmed emperically, see \cite{xin2022estimating,
behr2018revisiting, duintjer2005dynamic}, and even within individuals with the
same disease, see \cite{abufares2022covid, moosazadeh2015incidence,
hall1991immunity}. Our research delves into the analysis of disease
transmission by examining variations in time delays, cf. \cite{arino2006time,
al2016time, misra2013mathematical}.

The study employs distributed delay kernel functions to model the force of
infection from the exposed stage to the infectious stage, utilizing the
Lebesgue integrable probability density functions, $\Phi$, with compact
support on the positive real axis, that is%
\[
\int_{\mathbb{R_{+}}}\Phi(\rho)d\rho=1.
\]
We also consider distributed time delays for temporary recovery from the
disease with the same mathematical properties as the immunity function.

To ensure that the model accurately reflects the underlying system's behavior,
we analyze the properties of the solution of the endemic model. Our primary
goal is to enhance crucial aspects such as boundedness and non-negativity, as
solution that become unbounded or negative is not meaningful in the context of
the system being modeled. Additionally, we ensure smoothness of solution by
demonstrating increased smoothness as time progresses.

In the numerical simulation section, the solution to the SEIR$^{\text{T}}%
$R$^{\text{P}}$D endemic model with continuous kernel functions are compared
with analougous discrete lag endemic models. The simulation results indicates
that the approximated solution with discrete lags converges to the exact
solution as the number of lag points are increased.

The structure of this paper is as follows: In Section \ref{sec:endemicModel},
we present the system of an endemic model with the essential underlying
assumptions. In Section \ref{sec:mathematicalAnalysis}, we examine the
solution of the endemic model to ensure non-negativity, boundedness, and
increasing smoothness. To validate the analytical findings and enhance the
model complexity, Section \ref{sec:numericalSimulation} presents the results
of numerical simulations of the continuous time delay and discrete lag endemic
models. The paper ends with a discussion of the findings in the paper.

\section{An endemic dynamics model with distributed
delays\label{sec:endemicModel}}

The SEIR$^{\text{T}}$R$^{\text{P}}$D endemic model serves as a compartmental
model for comprehending the propagation of a disease within an unstructured
population. The population is stratified into six compartments: susceptible
($S$), exposed ($E$), infected ($I$), temporary recovered ($R^{\text{T}}$),
permanently recovered ($R^{\text{P}}$), and disease death ($D$). The dynamical
behavior of the SEIR$^{\text{T}}$R$^{\text{P}}$D model is encapsulated by a
system of delayed differential equations, as outlined in System
\ref{model:mainGenNoBirthOrNaturalDeath}.
\begin{align}
\frac{dS}{dt}  &  =-\beta(t)I(t)S(t)+p\gamma\int_{\mathbb{R_{+}}}I(t-\rho
)\Phi(\rho)d\rho,\nonumber\\
\frac{dE}{dt}  &  =\beta(t)I(t)S(t)-\int_{\mathbb{R_{+}}}\beta(t-\tau
)I(t-\tau)S(t-\tau)\Psi(\tau)d\tau,\nonumber\\
\frac{dI}{dt}  &  =\int_{\mathbb{R_{+}}}\beta(t-\tau)I(t-\tau)S(t-\tau
)\Psi(\tau)d\tau-\gamma I(t)-\mu
I(t),\label{model:mainGenNoBirthOrNaturalDeath}\\
\frac{dR^{\text{T}}}{dt}  &  =p\gamma I(t)-p\gamma\int_{\mathbb{R_{+}}%
}I(t-\rho)\Phi(\rho)d\rho,\nonumber\\
\frac{dR^{\text{P}}}{dt}  &  =(1-p)\gamma I(t),\nonumber\\
\frac{dD}{dt}  &  =\mu I(t).\nonumber
\end{align}

Within this framework, the transition from susceptible ($S$) to exposed ($E$)
status occurs through contact with infectious individuals ($I$). The force of
infection is defined by $\beta(t)I(t)S(t)$, where $\beta(t)S(t)$ represents
the relative rate of infection transmission upon interaction with infected
individuals. The contact rate ($\beta$) is a metric encompassing both the
frequency of interactions and the infectiousness of individuals. The latency
time for an exposed individual to become infectious is probabilistic, as
expressed by:%
\begin{equation}
\int_{\mathbb{R_{+}}}\beta(t-\tau)I(t-\tau)S(t-\tau)\Psi(\tau)d\tau,
\label{eq:kernelTau}%
\end{equation}
where $\Psi(\tau)$ denotes the probability density function for the transition
of an exposed individual to the infectious state \footnote{Notably, the time
delays in disease systems are often discrete. For instance, in
\cite{disease2024}, the transition from exposed to infectious is characterized
by a factor $\beta(t-\tau)I(t-\tau)S(t-\tau)$ for some constant $\tau_{0}>0$.
In contrast, we assume the duration time of exposure to be random.}. Moreover,
for some $0<\theta<L<\infty$, $\Psi(\tau)=0$ if $\tau\notin\lbrack\theta,L]$,
i.e.
\begin{equation}
\operatorname*{supp}(\Psi)\subset\lbrack\theta,L]. \label{Constant:ThetaAndL}%
\end{equation}
Surviving infected individuals acquire either temporary immunity
($R^{\text{T}}$) or permanent immunity ($R^{\text{P}}$). The duration of
temporary immunity is assumed to be probabilistic, governed by a probability
density function $\Phi$. After this duration, individuals revert to the
susceptible state, and the transition rate is given by:
\begin{equation}
p\gamma\int_{\mathbb{R_{+}}}I(t-\rho)\Phi(\rho)d\rho, \label{eq:kernelRho}%
\end{equation}
where the constant $\gamma$ represents the recovery rate. The value of $p$
accounts for the proportion of recovering individuals transitioning from the
infectious stage to the temporary recovered stage. Furthermore, we have for
some $0<\epsilon<M<\infty$, $\Phi(\rho)=0$ if $\rho\notin\lbrack\epsilon,M]$,
i.e.%
\begin{equation}
\operatorname*{supp}(\Phi)\subset\lbrack\epsilon,M]. \label{Constant:Epsilon}%
\end{equation}
Infected individuals experience a constant mortality rate $\mu$, implying that
if not recovered, the individual is transferred to the disease death ($D$) compartment.

The additional assumptions on the model are:

\begin{itemize}
\item[(i)] The birth rate and death rate of the population are considered to
be zero in this endemic model.

\item[(ii)] The contact rate $\beta(t)$ is assumed to be a positive and smooth function.

\item[(iii)] The history data for each compartments are assumed to be positive constant.
\end{itemize}

\section{Mathematical analysis of the model\label{sec:mathematicalAnalysis}}

In this section, we examine the solution of System
\ref{model:mainGenNoBirthOrNaturalDeath} to assess their non-negativity,
boundedness, and smoothness. To determine the smoothness of solution, we
initially investigate the continuity and differentiability of the integral
terms in the System \ref{model:mainGenNoBirthOrNaturalDeath} in its genenral
setting. With the help of the properties these integrals, the increasing
smoothness of solution are proved. From now on, we assume that System
\ref{model:mainGenNoBirthOrNaturalDeath} satisfies that the contact rate
$\beta$ is a non-negative smooth function on $\mathbb{R}$, the history data is
constant, i.e.,%
\begin{align}
S(s)  &  =c_{S}>0,\text{ for all }s\leq0,\label{eq:HS}\\
I(s)  &  =c_{I}>0,\text{ for all }s\leq0, \label{eq:HI}%
\end{align}
and finally that the initial data is%
\begin{align}
E(0)  &  =\beta_{0}c_{I}c_{S}\int_{\theta}^{L}\Psi(\tau)\tau d\tau
\text{,}\label{eq:Es}\\
R^{\text{T}}(0)  &  =c_{I}p\gamma\int_{\epsilon}^{M}\Phi(\rho)\rho
d\rho\text{,}\label{eq:Rs}\\
R^{\text{P}}(0)  &  =(1-p)\gamma c_{I}\int_{\theta}^{L}\Psi(\tau)\tau
d\tau\text{.} \label{eq:RPs}%
\end{align}

\subsection{Positivity and boundedness of
solution\label{sec:positivityAndBoundedness}}

We first prove positivity of the solution in System
\ref{model:mainGenNoBirthOrNaturalDeath} by utilizing the positive constant
values for historical data of susceptible and infectious individuals. We end
this section by proving boundedness of the solution to System
\ref{model:mainGenNoBirthOrNaturalDeath}.

\begin{theorem}
[Non-negativity]\label{Thm:Non-negativity}Assume that the parameters $\mu$ and
$\gamma$ in the System~\ref{model:mainGenNoBirthOrNaturalDeath} are
non-negative constants. The contact rate, $\beta(t),$ is non-negative for all
$t$. Let the history data for be given by equations \eqref{eq:HS} and
\eqref{eq:HI} and the initial data be equations \eqref{eq:Es}--\eqref{eq:RPs}. Then,
$S(t)$, $E(t),I(t)$, $R^{\text{T}}(t),$ and $R^{\text{P}}(t)$ are non-negative
for all $t>0$.
\end{theorem}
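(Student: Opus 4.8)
The plan is to handle the six components in two stages. \emph{Stage 1: positivity of $S$ and $I$.} Neither the $S$‑equation nor the $I$‑equation has a sign‑definite right‑hand side, and the two are coupled through the distributed delays, so a naive first‑crossing or Gr\"onwall argument fails. The key structural fact is that the kernels are supported away from the origin: $\operatorname{supp}\Psi\subset[\theta,L]$ and $\operatorname{supp}\Phi\subset[\epsilon,M]$ with $\theta,\epsilon>0$. Put $\delta:=\min(\theta,\epsilon)>0$. Solving the two equations by integrating factors (and inserting the history values \eqref{eq:HS}--\eqref{eq:HI}) gives
\begin{align*}
I(t)&=c_I e^{-(\gamma+\mu)t}+\int_0^t e^{-(\gamma+\mu)(t-s)}\int_{\mathbb{R}_{+}}\beta(s-\tau)I(s-\tau)S(s-\tau)\Psi(\tau)\,d\tau\,ds,\\
S(t)&=c_S\,e^{-\int_0^t\beta(u)I(u)\,du}+p\gamma\int_0^t e^{-\int_s^t\beta(u)I(u)\,du}\int_{\mathbb{R}_{+}}I(s-\rho)\Phi(\rho)\,d\rho\,ds,
\end{align*}
where the exponential weights are strictly positive regardless of signs. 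Now run a method of steps on intervals $[k\delta,(k+1)\delta]$. On $[0,\delta]$ every shifted argument $s-\tau$ or $s-\rho$ above is $\le 0$, where $I=c_I>0$, $S=c_S>0$ and $\beta\ge 0$; hence the integral terms are non‑negative and $S(t),I(t)>0$ on $[0,\delta]$. Inductively, if $S,I>0$ on $[0,k\delta]$, then for $s\le(k+1)\delta$ and $\tau,\rho\ge\delta$ the shifted arguments lie in $(-\infty,k\delta]$, where $I$ and $S$ are positive by the history or the induction hypothesis; the same two representations then give $S,I>0$ on $[0,(k+1)\delta]$. Hence $S(t)>0$ and $I(t)>0$ for all $t>0$, so $I\ge 0$ on all of $\mathbb{R}$.

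\emph{Stage 2: the remaining compartments.} Using $I\ge0$ everywhere and $0\le p\le1$ (built into the meaning of $p$), integrate the $R^{\text{P}}$‑equation directly: $R^{\text{P}}(t)=R^{\text{P}}(0)+(1-p)\gamma\int_0^t I(s)\,ds\ge R^{\text{P}}(0)\ge0$ by \eqref{eq:RPs}. For $R^{\text{T}}$ and $E$, whose right‑hand sides change sign, exhibit the manifestly non‑negative ``stock'' representations
\[
R^{\text{T}}(t)=p\gamma\int_{\mathbb{R}_{+}}\Phi(\rho)\int_{t-\rho}^{t}I(u)\,du\,d\rho,\qquad
E(t)=\int_{\mathbb{R}_{+}}\Psi(\tau)\int_{t-\tau}^{t}\beta(u)I(u)S(u)\,du\,d\tau .
\]
Because $\int_{\mathbb{R}_{+}}\Phi=\int_{\mathbb{R}_{+}}\Psi=1$, differentiating these (under the integral sign, legitimate since the kernels have compact support and the solution is continuous) reproduces exactly the $R^{\text{T}}$‑ and $E$‑equations of System~\ref{model:mainGenNoBirthOrNaturalDeath}; evaluating at $t=0$, using $I\equiv c_I$, $S\equiv c_S$ and $\beta\equiv\beta_0$ on $(-\infty,0]$, recovers the initial data \eqref{eq:Rs} and \eqref{eq:Es}. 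Since a function on $[0,\infty)$ is determined by its value at $0$ and its derivative, these expressions coincide with the true $R^{\text{T}}$ and $E$. As $I\ge0$ and $\beta I S\ge0$ on all of $\mathbb{R}$, and the inner integrals run over $[t-\rho,t]$ and $[t-\tau,t]$ with $\rho,\tau>0$, we conclude $R^{\text{T}}(t)\ge0$ and $E(t)\ge0$ for all $t>0$.

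The crux is Stage 1: making the method of steps rigorous for \emph{distributed} delays. The point to get right is that, because $\operatorname{supp}\Psi$ and $\operatorname{supp}\Phi$ lie in $[\theta,L]$ and $[\epsilon,M]$ with $\theta,\epsilon>0$, each step of length $\delta=\min(\theta,\epsilon)$ references the solution only on the already‑settled past, so the induction closes; the separation of $\Psi,\Phi$ from the origin is exactly what replaces the single‑lag ``steps'' argument of \cite{disease2024}. Stage 2 is then routine, the only care needed being the Fubini/differentiation‑under‑the‑integral step, which is justified by continuity of the solution and boundedness of $\Phi,\Psi$ on their compact supports. Throughout one assumes, as elsewhere in the paper, that a continuous solution of System~\ref{model:mainGenNoBirthOrNaturalDeath} exists on $[0,\infty)$.
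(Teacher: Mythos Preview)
Your proposal is correct and follows essentially the same route as the paper. For $S$ and $I$ you use the method of steps on windows of length $\delta=\min(\theta,\epsilon)$ together with the integrating-factor (variation-of-constants) representation; the paper does the same induction but phrases it as the differential inequality $I'\ge-(\gamma+\mu)I$, $S'\ge-\beta I S$, which is just the lower bound of your explicit formula. For $E$ and $R^{\text{T}}$ you write down the ``stock'' formulas $E(t)=\int\Psi(\tau)\int_{t-\tau}^{t}\beta I S\,du\,d\tau$ and $R^{\text{T}}(t)=p\gamma\int\Phi(\rho)\int_{t-\rho}^{t}I\,du\,d\rho$ and verify they satisfy the equations and the initial data \eqref{eq:Es}, \eqref{eq:Rs}; the paper arrives at the identical expressions by integrating the ODEs, applying Fubini, and shifting variables. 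The only cosmetic difference is that you check the representations a posteriori while the paper derives them, and you work with $\delta=\min(\theta,\epsilon)$ throughout whereas the paper (in the appendix) splits into the cases $\tau<\rho$ and $\rho<\tau$; your uniform choice is cleaner but not a different idea.
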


\begin{proof}
The proof of the positivity of $I$ and $S$ is performed by showing positivity
with the help of mathematical induction over time windows $n\tau\leq
t\leq(n+1)\tau$ where $n=0,1,\dots$. \newline The base case, $n=0:$%
\newline\textbf{Positivity of }$I$:\newline By assumption, $I(t-\tau)>0$ and
$S(t-\tau)>0$ for all $0\leq t\leq\tau$. Consider the dynamics for $I$ on this
interval
\begin{equation}
\frac{dI}{dt}=\int_{\mathbb{R_{+}}}\beta(t-\tau)I(t-\tau)S(t-\tau)\Psi
(\tau)d\tau-\gamma I(t)-\mu I(t). \label{eq:Idynamic}%
\end{equation}
By the assumption, we get
\[
\frac{dI}{dt}\geq-(\gamma+\mu)I(t).
\]
By multiplying with the integrating factors for both sides, we get
\[
\frac{dI}{dt}e^{^{\int_{0}^{t}\left(  \gamma+\mu\right)  ds}}+(\gamma
+\mu)I(t)e^{^{\int_{0}^{t}\left(  \gamma+\mu\right)  ds}}\geq0.
\]
Then,\newline%
\[
\frac{d}{dt}\left(  I(t)e^{^{\int_{0}^{t}\left(  \gamma+\mu\right)  ds}%
}\right)  \geq0.
\]
Integrating both sides gives
\begin{equation}
I(t)\geq I(0)e^{-\int_{0}^{t}\left(  \gamma+\mu\right)  ds}>0.
\label{eqn:diffI}%
\end{equation}
Thus, it is clear from the solution that $I(t)$ is positive in this interval,
since the initial value $I(0)$ is positive and the exponential functions are
always positive.\newline\textbf{Positivity of }$S$: The proof follows the same
idea as the above proof. The proof is postponed to Appendix
\ref{app:nonNegativity}.\newline Thus, we have proved that $S(t)$ and $I(t)$
are positive for $0\leq t\leq\tau$, completing the base case.\newline
Induction step: Assume that $S(t)$ and $I(t)$ are positive for $m\tau\leq
t\leq m(\tau+1)$. \newline The positivity of $S(t)$ and $I(t)$ for
$m(\tau+1)\leq t\leq m(\tau+2)$ is proved analougously as in the base
case.\newline With the help of mathematical induction, the positivity for
$I(t)$ and $S(t)$ for all $t$ follows.\newline\textbf{Positivity of }$E$:
\newline Recall that $\operatorname*{supp}\Psi\subset\lbrack\theta,L]$, where
$0<\theta<L<\infty$. The dynamics of $E$ can thus be evaluated as\newline%
\begin{align*}
\frac{dE(t)}{dt}  &  =\beta(t)I(t)S(t)-\int_{\mathbb{R_{+}}}\beta
(t-\tau)I(t-\tau)S(t-\tau)\Psi(\tau)d\tau,\\
&  =\beta(t)I(t)S(t)-\int_{\theta}^{L}\beta(t-\tau)I(t-\tau)S(t-\tau)\Psi
(\tau)d\tau.
\end{align*}
Integrating both sides gives
\[
E(t)=E(0)+\int_{0}^{t}\int_{\theta}^{L}\left(  \beta(s)I(s)S(s)-\beta
(s-\tau)I(s-\tau)S(s-\tau)\right)  \Psi(\tau)d\tau ds.
\]
By the use of Fubini's theorem, the linearity of the integral and a change of
variables $s-\tau\curvearrowright s$, we get
\begin{align*}
E(t)  &  =E(0)+\int_{\theta}^{L}\Psi(\tau)\int_{0}^{t}\left(  \beta
(s)I(s)S(s)-\beta(s-\tau)I(s-\tau)S(s-\tau)\right)  dsd\tau,\\
&  =E(0)+\int_{\theta}^{L}\Psi(\tau)\left(  \int_{0}^{t}\beta
(s)I(s)S(s)ds-\int_{-\tau}^{t-\tau}\beta(s)I(s)S(s)ds\right)  d\tau,\\
&  =E(0)+\int_{\theta}^{L}\Psi(\tau)\left(  \int_{t-\tau}^{t}\beta
(s)I(s)S(s)ds-\int_{-\tau}^{0}\beta(s)I(s)S(s)ds\right)  d\tau.
\end{align*}
Since $S(t)$ and $I(t)$ and $\beta(t)$ are non-negative, the history data
given by Equation \eqref{eq:Es} gives%
\begin{align*}
E(t)  &  =\beta_{0}c_{I}c_{S}\int_{\theta}^{L}\Psi(\tau)\tau d\tau
+\int_{\theta}^{L}\Psi(\tau)\int_{t-\tau}^{t}\beta(s)I(s)S(s)dsd\tau\\
&\quad-\beta
_{0}c_{I}c_{S}\int_{\theta}^{L}\Psi(\tau)\tau d\tau,\\
&  =\int_{\theta}^{L}\Psi(\tau)\int_{t-\tau}^{t}\beta(s)I(s)S(s)dsd\tau\geq0.
\end{align*}
Thus $E(t)$ is non-negative for all $t$.\newline\textbf{Positivity of
}$R^{\text{T}}$ \textbf{and} $R^{\text{P}}$: The demonstration of the
positivity of $R^{\text{T}}$ follows the same underlying idea as the proof
establishing for the positivity of $E$. It is clear that the positivity of
$R^{\text{T}}$ is hold by the positivity of $I.$ The proof is postponed to
Appendix \ref{app:nonNegativity}.
\end{proof}

We now turn our attention to the boundedness of the solution.

\begin{theorem}
[Boundedness]Under the same assumptions as in Theorem \ref{Thm:Non-negativity}%
, the solution $\{S,E,I,R^{\text{T}},R^{\text{P}}\}$ of System
\ref{model:mainGenNoBirthOrNaturalDeath} are all bounded.
\end{theorem}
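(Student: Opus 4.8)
The plan is to exploit the fact that, apart from the loss to the disease-death compartment $D$, the model conserves total population. Introduce the \emph{living population}
\[
W(t)=S(t)+E(t)+I(t)+R^{\text{T}}(t)+R^{\text{P}}(t),
\]
and add the first five equations of System~\ref{model:mainGenNoBirthOrNaturalDeath}. The bilinear incidence term $\beta(t)I(t)S(t)$ appears with opposite signs in $dS/dt$ and $dE/dt$; the latency integral $\int_{\mathbb{R_+}}\beta(t-\tau)I(t-\tau)S(t-\tau)\Psi(\tau)\,d\tau$ appears with opposite signs in $dE/dt$ and $dI/dt$; and the temporary-immunity integral $p\gamma\int_{\mathbb{R_+}}I(t-\rho)\Phi(\rho)\,d\rho$ appears with opposite signs in $dS/dt$ and $dR^{\text{T}}/dt$. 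All of these cancel pointwise, with no need for Fubini or a change of variables, and what remains from the linear terms is $-\gamma I-\mu I+p\gamma I+(1-p)\gamma I=-\mu I$. Hence
\[
W'(t)=-\mu\,I(t).
\]

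Next I would invoke Theorem~\ref{Thm:Non-negativity}, which gives $I(t)\ge 0$ for all $t>0$, together with the hypothesis $\mu\ge 0$, to conclude that $W'(t)\le 0$ on $(0,\infty)$. Therefore $W$ is non-increasing and $W(t)\le W(0)$ for every $t\ge 0$, where $W(0)=c_S+E(0)+c_I+R^{\text{T}}(0)+R^{\text{P}}(0)$ is a fixed finite constant determined by the history and initial data \eqref{eq:HS}--\eqref{eq:RPs}. Finally, since each of $S,E,I,R^{\text{T}},R^{\text{P}}$ is non-negative (again by Theorem~\ref{Thm:Non-negativity}), we have for every such component $X$ and all $t\ge 0$
\[
0\le X(t)\le W(t)\le W(0),
\]
which is exactly the claimed boundedness. (Equivalently, one may note that $S+E+I+R^{\text{T}}+R^{\text{P}}+D$ is exactly conserved, so each compartment is bounded by the initial total.)

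There is no deep obstacle here; the proof is essentially the bookkeeping above. The only points that need care are: (a) verifying the pairwise cancellation precisely, keeping track of the $p$ and $1-p$ weights in the $R^{\text{T}}$ and $R^{\text{P}}$ equations; (b) justifying that one may differentiate/sum the equations along the solution, i.e.\ that the solution is absolutely continuous on $[0,\infty)$ and that the delay integrals are well defined — both of which follow from the compact-support assumptions \eqref{Constant:ThetaAndL}, \eqref{Constant:Epsilon} on $\Psi,\Phi$, the smoothness of $\beta$, and the continuity of the solution established earlier; and (c) relying on the global non-negativity of $I$ from Theorem~\ref{Thm:Non-negativity} so that the sign of $W'$ is controlled. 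Once these are in place, the bound $W(0)$ is uniform in $t$, so the solution is in fact bounded on all of $[0,\infty)$.
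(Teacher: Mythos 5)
Your proposal is correct and follows essentially the same route as the paper: the paper likewise sums the five living compartments into a total $N(t)$, observes that all delay and incidence terms cancel to leave $N'(t)=-\mu I(t)\le 0$, and then uses the non-negativity theorem to bound each component by $N(0)$. Your extra remarks on the pairwise cancellation and on the regularity needed to justify summing the equations are fine but not a departure from the paper's argument.
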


\begin{proof}
The total population size is
\begin{equation}
N(t)=S(t)+E(t)+I(t)+R^{\text{T}}(t)+R^{\text{P}}(t). \label{eq:total}%
\end{equation}
The rate of change of the total population is%
\[
\frac{dN(t)}{dt}=\frac{dS(t)}{dt}+\frac{dE(t)}{dt}+\frac{dI(t)}{dt}%
+\frac{dR^{\text{T}}(t)}{dt}+\frac{dR^{\text{P}}(t)}{dt}=-\mu I(t).
\]
where the last equality follows from System
\ref{model:mainGenNoBirthOrNaturalDeath}. Hence, the total population is
non-increasing and since we have proved that all functions $S,E,I,R^{\text{T}%
},$ and $R^{\text{P}}$ are non-negative, the result is proved.
\end{proof}

\subsection{Some results on continuity and differentaibility}

In this section, we investigate the continuity and differentiability of the
integral of $H(t-\rho)\Phi(\rho),$these results will be applied in Section
\ref{sec:smoothness}. The results in this section are stated in a more general
setting and may thus be used in other applications.

\begin{theorem}
\label{the:conAnddiff} Let $H\in L^{1}(\mathbb{R})$ and let $\Phi\in
L^{1}(\mathbb{R})$ be non-negative with compact support. Define%
\[
F(t)=\int H(t-\rho)\Phi(\rho)d\lambda(\rho).
\]

\begin{enumerate}
\item If $H$ is continous then $F$ is continuous, that is
\[
\lim_{s\rightarrow t}F(s)=F(t).
\]

\item If $H\in C^{(k)}$ then $F$ is differentiable, and
\[
F^{(k)}(t)=\int_{-\infty}^{\infty}\frac{d^{k}}{dt^{k}}H(t-\rho))\Phi(\rho
)d\mu(\rho).
\]

\end{enumerate}
\end{theorem}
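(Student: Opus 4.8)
The plan is to derive both parts from the standard Lebesgue convergence theorems, using crucially that $\Phi$ has compact support, so that $H$ and its derivatives are only ever evaluated on a fixed compact set when $t$ ranges over a bounded neighbourhood. Fix once and for all an interval $[a,b]\supseteq\operatorname{supp}(\Phi)$. For Part 1, I would fix $t$, work with $s\in[t-1,t+1]$, and note that for $\rho\in[a,b]$ the argument $s-\rho$ stays in the compact set $K=[t-1-b,\,t+1-a]$. Since $H$ is continuous it is bounded on $K$, say $|H|\le C_K$ there, so $|H(s-\rho)\Phi(\rho)|\le C_K\,\Phi(\rho)$, and this majorant is integrable because $\Phi\in L^{1}$. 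Taking any sequence $s_n\to t$, pointwise continuity of $H$ gives $H(s_n-\rho)\Phi(\rho)\to H(t-\rho)\Phi(\rho)$ for every $\rho$, and the dominated convergence theorem yields $F(s_n)\to F(t)$; since the sequence was arbitrary, $F$ is continuous at $t$.

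For Part 2, I would first treat $k=1$ and then induct. Fixing $t_0$ and restricting to $t\in[t_0-1,t_0+1]$, the same confinement $t-\rho\in K$ holds; for each fixed $\rho$ the map $t\mapsto H(t-\rho)\Phi(\rho)$ is differentiable with derivative $H'(t-\rho)\Phi(\rho)$, and since $H'$ is continuous it is bounded on $K$, giving the uniform-in-$t$ bound $|H'(t-\rho)\Phi(\rho)|\le C'_K\,\Phi(\rho)\in L^{1}$. The Leibniz rule for differentiation under the integral sign then shows $F$ is differentiable with $F'(t)=\int H'(t-\rho)\Phi(\rho)\,d\lambda(\rho)$. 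Since $H'\in C^{(k-1)}$ whenever $H\in C^{(k)}$, iterating this argument $k$ times gives the stated formula $F^{(k)}(t)=\int H^{(k)}(t-\rho)\Phi(\rho)\,d\lambda(\rho)$, and applying Part 1 to $H^{(k)}$ shows in addition that $F^{(k)}$ is continuous, so in fact $F\in C^{(k)}$.

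The only step requiring genuine care is exhibiting the integrable dominating function needed by the dominated convergence theorem and by the Leibniz rule. This is precisely where compactness of $\operatorname{supp}(\Phi)$ enters: it forces the arguments $t-\rho$ into a compact set on which the continuous functions $H,H',\dots,H^{(k)}$ are automatically bounded, with a bound that is uniform over a neighbourhood of the point in question. (Note that, for these conclusions, the hypothesis $H\in L^{1}$ is not actually used beyond guaranteeing that $F$ is well defined; local boundedness, which follows from continuity, is what the argument needs.) Everything else is routine bookkeeping.
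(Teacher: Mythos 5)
Your proposal is correct and follows essentially the same route as the paper: the compact support of $\Phi$ confines $t-\rho$ to a fixed compact set $K$ when $t$ ranges over a bounded neighbourhood, the continuous function $H$ (resp.\ $H'$) is bounded on $K$, and $C\Phi(\rho)$ serves as the integrable majorant needed for the dominated convergence theorem. The only cosmetic differences are that for Part~2 you invoke the standard Leibniz rule for differentiation under the integral sign, whereas the paper re-derives that rule by applying the mean value theorem to the difference quotients and then dominated convergence, and that you carry out the induction on $k$ explicitly where the paper merely remarks that the higher derivatives follow by the same procedure.
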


\begin{proof}
Part 1.\newline Let $f^{t_{n}}(\rho)=H(t_{n}-\rho)\Phi(\rho)$, where
$t_{n}\rightarrow t$ as $n\rightarrow\infty$.\newline We know that
$H(t_{n}-\rho)$ is continuous function That is,%
\[
\lim_{n\rightarrow\infty}H(t_{n}-\rho)=H(t-\rho).
\]
\newline Let $K$ be a compact set such that $\{t_{n}\}\cup\operatorname*{supp}%
(\Phi)\cup\{t\}\subset K.$\newline Since $\Phi\in L^{1},$ it follows that
$I=\{x:\Phi(x)=\infty\}$ is a null set (i.e. it has measure zero). For any
$\rho\in\mathbb{R}\backslash I$ the sequence $f^{t_{n}}(\rho)\rightarrow
f^{t}(\rho)=H(t-\rho)\Phi(\rho)$, i.e., $f_{n}\rightarrow f$ almost
everywhere. We also have that $0<\max_{x\in K}|H(x)|=C<\infty$ since the
continuous function $H$ is bounded on any compact set. Using that $\Phi\in
L^{1}(\mathbb{R})$ is non-negative, it is obvious that $|f^{t_{n}}(\rho)|\leq
C\Phi(\rho)$ for all $n$. The hypothesis in the Lebesgue dominated convergence
theorem (LDCT) are fulfilled for $f^{t_{n}}(\rho)$. Thus%
\begin{align*}
F(t)  &  =\int_{-\infty}^{\infty}\underbrace{H(t-\rho)\Phi(\rho)}_{=f^{t}%
(\rho)}d\lambda(\rho),\\
&  =\int_{-\infty}^{\infty}\lim_{n\rightarrow\infty}H(t_{n}-\rho)\Phi
(\rho)d\lambda(\rho),\\
&  =\int_{-\infty}^{\infty}\lim_{n\rightarrow\infty}f^{t_{n}}(\rho
)d\lambda(\rho).
\end{align*}
By LDCT, we get%
\begin{align*}
F(t)  &  =\lim_{n\rightarrow\infty}\int_{-\infty}^{\infty}f^{t_{n}}%
(\rho)d\lambda(\rho),\\
&  =\lim_{n\rightarrow\infty}\int_{-\infty}^{\infty}H(t_{n}-\rho)\Phi
(\rho)d\lambda(\rho),\\
&  =\lim_{n\rightarrow\infty}F(t_{n}).
\end{align*}
Hence $F$ is continuous.\newline Part 2.\newline By definition of derivative,
we have%
\begin{equation}
\frac{df^{t}(\rho)}{dt}=\lim_{n\rightarrow\infty}\frac{f^{t_{n}}(\rho
)-f^{t}(\rho)}{t_{n}-t}=\lim_{n\rightarrow\infty}h_{n}(\rho).
\label{eq:diff_fRho}%
\end{equation}
\newline The above statement is well defined since $f^{t_{n}}(\rho
)=H(t_{n}-\rho)\Phi(\rho)$ is differentiable with respect to $t$ for almost
all $\rho$.\newline The mean value theorem gives,
\[
\left\vert \frac{H(t_{n}-\rho)-H(t-\rho)}{t_{n}-t}\right\vert =|H^{\prime
}(c_{n})|\leq\sup_{x\in K}|H^{\prime}(x)|=C<\infty.
\]
Then,%
\begin{align}
\left\vert h_{n}(\rho)\right\vert  &  =\left\vert \frac{f^{t_{n}}(\rho
)-f^{t}(\rho)}{t_{n}-t}\right\vert ,\nonumber\\
&  =\left\vert \frac{H(t_{n}-\rho)-H(t-\rho)}{t_{n}-t}\right\vert \Phi
(\rho),\nonumber\\
&  \leq C\Phi(\rho), \label{eq:bounded}%
\end{align}
\newline for almost every $\rho$ since $\Phi(\rho)$ is a bounded function for
each $\rho\in I$. The hypothesis in the dominated convergence theorem are
fulfilled for $h_{n}$.\newline By definition of derivative,%
\[
F^{^{\prime}}(t)=\lim_{n\rightarrow\infty}\frac{F(t_{n})-F(t)}{t_{n}-t}.
\]
\newline Substitute $F(t_{n})=\int f^{t_{n}}(\rho)d\lambda(\rho)$ in the above
equation, we get%
\begin{align*}
F^{^{\prime}}(t)  &  =\lim_{n\rightarrow\infty}\frac{\int f^{t_{n}}%
(\rho)d\lambda(\rho)-\int f^{t}(\rho)d\lambda(\rho)}{t_{n}-t},\\
&  =\lim_{n\rightarrow\infty}\int\frac{\left(  f^{t_{n}}(\rho)-f^{t}%
(\rho)\right)  }{t_{n}-t}d\lambda(\rho)=\lim_{n\rightarrow\infty}\int
h_{n}(\rho)d\lambda(\rho).
\end{align*}
By Lebesgue dominated convergence theorem,
\[
F^{^{\prime}}(t)=\int\lim_{n\rightarrow\infty}h_{n}(\rho)d\lambda(\rho
)=\int\lim_{n\rightarrow\infty}\frac{\left(  f^{t_{n}}(\rho)-f^{t}%
(\rho)\right)  }{t_{n}-t}d\lambda(\rho).
\]
By using Equation \eqref{eq:diff_fRho}, we get%
\begin{align*}
F^{^{\prime}}(t)  &  =\int\frac{df^{t}(\rho)}{dt}d\lambda(\rho),\\
&  =\int\frac{d}{dt}H(t-\rho)\Phi(\rho)d\lambda(\rho),
\end{align*}
for almost every $\rho.$ To prove for the higher derivative ,$F^{(k)}(t),$ we
can follow the same procedure.
\end{proof}

\begin{corollary}
\label{cor:continuous} Let $H:\mathbb{R}\rightarrow\mathbb{R}$ is continuous,
and let $\Phi\in L^{1}(\mathbb{R})$ be non-negative with compact support.
Define, for all $t\geq a$%
\[
F(t)=\int_{a}^{t}H(t-\rho)\Phi(\rho)d\lambda(\rho),
\]
then $F$ is continuous, that is
\[
\lim_{s\rightarrow t}F(s)=F(t).
\]

\end{corollary}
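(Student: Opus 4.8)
The plan is to reduce Corollary~\ref{cor:continuous} to Part~1 of Theorem~\ref{the:conAnddiff} by rewriting the truncated integral as an integral over all of $\mathbb{R}$ against a modified kernel. First I would fix $t\geq a$ and observe that, since $\Phi$ has compact support, say $\operatorname{supp}(\Phi)\subset[b_1,b_2]$ with $b_1>0$ (as in our model, $\operatorname{supp}(\Phi)\subset[\epsilon,M]$), the integrand $H(t-\rho)\Phi(\rho)$ is supported in a bounded $\rho$-interval. The subtlety is the \emph{variable} upper limit $t$ in the domain of integration: as $t$ varies the region $[a,t]$ changes, so one cannot directly invoke Theorem~\ref{the:conAnddiff}, whose domain of integration is fixed.

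The key step is to absorb the truncation into the kernel. Define $\widetilde{H}:\mathbb{R}\to\mathbb{R}$ by $\widetilde{H}(x)=H(x)$ for $x\geq 0$ and $\widetilde{H}(x)=0$ for $x<0$; this is not continuous at $0$ in general, so instead I would work the other way and keep $H$ continuous while truncating $\Phi$. Concretely, for a fixed target point $t_0$ and $s$ in a small neighborhood $[t_0-\delta,t_0+\delta]$, note that $\rho\leq s\iff s-\rho\geq 0$, and since we only need local continuity at $t_0$ it suffices to handle $s$ near $t_0$. Split
\[
F(s)=\int_{a}^{t_0-\delta}H(s-\rho)\Phi(\rho)\,d\lambda(\rho)+\int_{t_0-\delta}^{s}H(s-\rho)\Phi(\rho)\,d\lambda(\rho).
\]
The first term has a \emph{fixed} domain $[a,t_0-\delta]$ and a continuous integrand, so Part~1 of Theorem~\ref{the:conAnddiff} (applied with $\Phi$ replaced by $\Phi\cdot\mathbf{1}_{[a,t_0-\delta]}$, which is still non-negative, $L^1$, and compactly supported) gives continuity of the first term at $t_0$. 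For the second term I would bound it crudely: $\bigl|\int_{t_0-\delta}^{s}H(s-\rho)\Phi(\rho)\,d\lambda(\rho)\bigr|\leq \bigl(\max_{[0,2\delta]}|H|\bigr)\int_{t_0-\delta}^{t_0+\delta}\Phi(\rho)\,d\lambda(\rho)$, and since $\Phi\in L^1$ the integral $\int_{t_0-\delta}^{t_0+\delta}\Phi$ tends to $0$ as $\delta\to 0$ by absolute continuity of the Lebesgue integral; here $\max|H|$ over the shrinking window stays bounded because $H$ is continuous hence locally bounded.

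Putting these together: given $\varepsilon>0$, choose $\delta$ small enough that the second term is $<\varepsilon/2$ for all $s\in[t_0-\delta,t_0+\delta]$, then use continuity of the first (fixed-domain) term to make $|F(s)-F(t_0)|<\varepsilon$ for $s$ sufficiently close to $t_0$. The main obstacle is purely bookkeeping — handling the moving endpoint cleanly — and the device above (freeze the bulk of the domain, absorb the thin moving sliver into an error term controlled by absolute continuity of the integral) dispatches it. One should also note the trivial case where $\operatorname{supp}(\Phi)$ lies entirely to the left of $a$ or where $t<a+\inf\operatorname{supp}(\Phi)$, in which $F\equiv 0$ near such points; this is consistent with the statement and needs no separate argument beyond the estimate already given.
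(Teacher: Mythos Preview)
Your argument is correct. The splitting at a \emph{fixed} point $t_0-\delta$, controlling the bulk integral via Part~1 of Theorem~\ref{the:conAnddiff} applied to the truncated kernel $\Phi\cdot\mathbf{1}_{[a,t_0-\delta]}$, and absorbing the moving sliver $[t_0-\delta,s]$ by absolute continuity of $\int\Phi$, all go through as you describe. The only cosmetic points are that you should take $\delta<t_0-a$ so the split is well-defined (the boundary case $t_0=a$ follows from your sliver estimate alone), and that the ``second term $<\varepsilon/2$'' really yields a bound of $\varepsilon$ on the \emph{difference} of second terms, so an $\varepsilon/3$--$\varepsilon/3$--$\varepsilon/3$ bookkeeping would be cleaner.

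The paper's proof takes a different route. It fixes the target point $t$, picks a sequence $t_n\to t$, and decomposes
\[
F(t)-F(t_n)=\underbrace{\int_a^{t_n}\bigl(H(t-\rho)-H(t_n-\rho)\bigr)\Phi(\rho)\,d\lambda(\rho)}_{I(t_n)}+\underbrace{\int_{t_n}^{t}H(t-\rho)\Phi(\rho)\,d\lambda(\rho)}_{II(t_n)},
\]
so the split is at the \emph{moving} point $t_n$ rather than at a fixed buffer $t_0-\delta$. The tail $II(t_n)$ is handled essentially as you handle your sliver. For the bulk $I(t_n)$, the paper does \emph{not} invoke Theorem~\ref{the:conAnddiff}; instead it applies H\"older's inequality to get $|I(t_n)|\le\|H(t-\cdot)-H(t_n-\cdot)\|_{L^\infty[a,b]}\|\Phi\|_{L^1}$ and then uses uniform continuity of $H$ on a compact set. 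Your approach is more modular and makes the ``Corollary'' label honest by genuinely reducing to the theorem; the paper's approach is self-contained and avoids re-invoking dominated convergence, at the cost of essentially reproving a sup-norm continuity estimate that Theorem~\ref{the:conAnddiff} already encapsulates.
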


\begin{proof}
The continuity of $F$ is proved by using H\"{o}lder's inequality and being
uniformly continuous of $H$ The proof is postponed to Appendix
\ref{app:proofCor}.
\end{proof}

\subsection{Enhancing Solution Smoothness Over Time\label{sec:smoothness}}

The smoothness of the solution of System
\ref{model:mainGenNoBirthOrNaturalDeath} are established in this section with
the help of Theorem \ref{the:conAnddiff} and Corollary \ref{cor:continuous}.
The progression of solution smoothness is evident as the window interval is
systematically shifted to the right, highlighting a continual enhancement over time.

\begin{definition}
Let $r>0$. A function $f:\mathbb{R}_{+}\rightarrow\mathbb{R}_{+}$ is said to
be $r$-\emph{increasingly smooth} if $f\in C^{n+1}((rn,\infty))$ for all
$k=0,1,\ldots$ .
\end{definition}

\begin{theorem}
\label{the:smoothness}Let System~\ref{model:mainGenNoBirthOrNaturalDeath}
have constant history data given by equations \eqref{eq:HS} and \eqref{eq:HI}, and
initial conditions $E(0)$, $R^{\text{T}}(0)$ and $R^{\text{P}}(0)$ given by
equations \eqref{eq:Es}--\eqref{eq:RPs}. Then, there exist an $r>0$ such that the
solution functions to System~\ref{model:mainGenNoBirthOrNaturalDeath} are
$r$-\emph{increasingly smooth}.
\end{theorem}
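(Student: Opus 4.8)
I would prove the statement with $r=\max\{L,M\}$, the \emph{largest} delay in the model (I will indicate at the end why this argument does not obviously give the smaller value $\min\{\theta,\epsilon\}$). Write $r_{0}=\min\{\theta,\epsilon\}>0$ for the smallest delay, and extend $I,S$ to all of $\mathbb{R}$ by their constant histories \eqref{eq:HS}--\eqref{eq:HI}, so that $I,S$ are continuous on $\mathbb{R}$ and $C^{\infty}$ on $(-\infty,0)$. For $t>0$ let $\sigma(t)\in\{0,1,2,\dots\}\cup\{\infty\}$ denote the largest $k$ for which all five components $S,E,I,R^{\text T},R^{\text P}$ are $C^{k}$ on some neighbourhood of $t$, with the convention $\sigma\equiv\infty$ on $(-\infty,0)$. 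The target is exactly $\sigma\ge n+1$ on $(nr,\infty)$ for every $n$, and the route is a base case together with a one-step propagation inequality fed into an induction on $n$.

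\textbf{First steps.} I would first record two easy facts. Existence of the solution on $[0,\infty)$ follows by the method of steps with step $r_{0}$: on each window $[kr_{0},(k+1)r_{0}]$ every delayed argument $t-\rho$, $t-\tau$ lies in $(-\infty,kr_{0}]$ (because $\rho\ge\epsilon\ge r_{0}$ and $\tau\ge\theta\ge r_{0}$), so the system there is an ODE whose right-hand side is already known. On the first window $(0,r_{0})$ all delayed arguments are negative, hence $\int_{\mathbb{R}_{+}}I(t-\rho)\Phi(\rho)\,d\rho\equiv c_{I}$ and $\int_{\mathbb{R}_{+}}\beta(t-\tau)I(t-\tau)S(t-\tau)\Psi(\tau)\,d\tau=c_{I}c_{S}\int_{\theta}^{L}\beta(t-\tau)\Psi(\tau)\,d\tau$, which is $C^{\infty}$ by Theorem~\ref{the:conAnddiff}(2) applied with $H=\beta$; thus there the system is an ODE with $C^{\infty}$ data and $\sigma\equiv\infty$ on $(0,r_{0})$. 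For the base case $n=0$ I would note that for any $t>0$ each right-hand side in System~\ref{model:mainGenNoBirthOrNaturalDeath} is a finite combination of products of the continuous functions $\beta,I,S$ with the two delay integrals, and those integrals are continuous on $\mathbb{R}$ by Theorem~\ref{the:conAnddiff}(1) / Corollary~\ref{cor:continuous}; hence every component is $C^{1}$ on $(0,\infty)$, i.e. $\sigma\ge1$ there. (The specific values \eqref{eq:Es}--\eqref{eq:RPs} are precisely what makes $E,R^{\text T},R^{\text P}$ continuous at $0$.)

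\textbf{The propagation inequality and the induction.} The heart of the argument will be the inequality
\[
\sigma(t)\ \ge\ 1+\inf_{s\in[\,t-\Lambda,\;t-r_{0}\,]}\sigma(s),\qquad t>0,\qquad \Lambda:=\max\{L,M\}.
\]
To get it, set $k=\inf_{[t-M,t-\epsilon]}\sigma$; since $\{\sigma\ge k\}$ is open and $[t-M,t-\epsilon]$ is compact, $I$ is $C^{k}$ on an open neighbourhood $V$ of $[t-M,t-\epsilon]$, and replacing $I$ outside $V$ by a globally $C^{k}$ function does not change $\int_{\mathbb{R}_{+}}I(s-\rho)\Phi(\rho)\,d\rho$ for $s$ in a neighbourhood of $t$; Theorem~\ref{the:conAnddiff}(2) then gives $\int_{\mathbb{R}_{+}}I(t-\rho)\Phi(\rho)\,d\rho\in C^{k}$ near $t$, and likewise $\int_{\mathbb{R}_{+}}\beta(t-\tau)I(t-\tau)S(t-\tau)\Psi(\tau)\,d\tau\in C^{k'}$ near $t$ with $k'=\inf_{[t-L,t-\theta]}\sigma$. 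Feeding these into $I'=(\,\cdot\,)-(\gamma+\mu)I$ and $S'=-\beta IS+p\gamma(\,\cdot\,)$ and running a short bootstrap in each scalar ODE, starting from the $C^{1}$ regularity already in hand, yields $I,S\in C^{1+\min\{k,k'\}}$ near $t$; then $E,R^{\text T},R^{\text P}$ inherit at least the same regularity from their own equations. Since $[t-M,t-\epsilon]\cup[t-L,t-\theta]\subset[t-\Lambda,t-r_{0}]$, the inequality follows. The induction now closes: $n=0$ is the base case, and if $\sigma\ge n+1$ on $(n\Lambda,\infty)$ and $t>(n+1)\Lambda$ then $[t-\Lambda,t-r_{0}]\subset(n\Lambda,\infty)$, so the inequality gives $\sigma(t)\ge n+2$. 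Hence $\sigma\ge n+1$ on $(n\Lambda,\infty)$ for all $n$, i.e. every solution component lies in $C^{n+1}((n\Lambda,\infty))$, so the solution functions are $r$-increasingly smooth with $r=\Lambda$.

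\textbf{Main obstacle.} The delicate point is the localised use of Theorem~\ref{the:conAnddiff}(2) in the propagation inequality: one must make precise that $\int I(t-\rho)\Phi(\rho)\,d\rho$ and $\int\beta IS(t-\tau)\Psi(\tau)\,d\tau$ are as smooth near $t$ as $I$ (resp. $\beta IS$) is on a neighbourhood of the range of delayed arguments — that convolution against the non-negative, compactly supported kernels $\Phi,\Psi$ never lowers regularity — which needs the openness of $\{\sigma\ge k\}$ and a cut-off argument as above. The structural reason I take $r\ge\max\{L,M\}$ rather than the minimal delay is that the value of a solution at time $t$ is influenced by the whole segment back to $t-\max\{L,M\}$, so one integration of the system improves regularity only on the scale of a single maximal delay; pushing $r$ down towards $\min\{\theta,\epsilon\}$ would require exploiting the extra smoothing of the \emph{continuous} (rather than merely $L^{1}$) kernels, which I would avoid for this qualitative statement.
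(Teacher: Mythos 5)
Your proposal is correct and follows the same overall strategy as the paper: first establish $C^{1}$ regularity of all components on $(0,\infty)$ by the method of steps, then run an induction in which each pass of differentiation under the integral sign (Theorem~\ref{the:conAnddiff}) trades one unit of smoothness of the delayed integrands for one extra derivative of the solution. The differences are worth recording. First, the paper proves the claim with $r=L$ and handles the $\Phi$-integral in the $S$-equation only by the remark that ``$S$ depends only on $I$''; your choice $r=\max\{L,M\}$ is the one actually justified by the induction, since the regularity of $\int I(t-\rho)\Phi(\rho)\,d\rho$ near $t$ is controlled by the regularity of $I$ on $[t-M,t-\epsilon]$, and in the paper's own numerical example $M=250\gg L=15$; so the ``main obstacle'' you flag is precisely where the paper's step~ix) is thinnest, and your more conservative $r$ closes it (both values satisfy the theorem as stated, which only asserts existence of some $r>0$). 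Second, the paper spends steps~i)--viii) verifying continuity and matching one-sided derivatives window by window (e.g.\ at $t=\varepsilon$), whereas you obtain the base case in one stroke from the observation that the right-hand sides are continuous on $(0,\infty)$, so the fundamental theorem of calculus gives $C^{1}$ without any matching computation; this is cleaner and equivalent. Third, your propagation inequality $\sigma(t)\ge 1+\inf_{[t-\Lambda,\,t-r_{0}]}\sigma$ packages the paper's induction hypothesis ``$S,I\in C^{k}(((k-1)L,\infty))$'' into a local, reusable statement; to make it airtight you should note that the globally $C^{k}$ extension used in the cut-off argument can be taken compactly supported so that Theorem~\ref{the:conAnddiff} (which assumes $H\in L^{1}$) applies, and spell out the short bootstrap in the scalar ODEs for $I$ and $S$ --- both are routine. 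In short: same skeleton, a larger but safer $r$, and a tidier induction that repairs a genuine imprecision in the paper's treatment of the $S$-component.
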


\begin{proof}
Let $\epsilon,\theta$, $M$ and $L$ be defined by equations
\eqref{Constant:ThetaAndL} and \eqref{Constant:Epsilon}. We set $\varepsilon
=\min(\epsilon,\theta)$. We will demonstrate that $r=L$ holds true within the
context of the theorem's hypothesis. The proof is divided into the parts:
\newline i) -- v) Continuity on $(-\infty,\varepsilon)$ of $I$,$\ S$\ and
differentiability on $(-\infty,\varepsilon)$ of $R^{\text{T}},\ R^{\text{P}}$
and $E$, \newline vi-vii) Differentiability on $(0,2\varepsilon)$ of
$I$,$\ S,$ $R^{\text{T}},\ R^{\text{P}}$ and $E$,\newline viii)
Differentiability on $(0,\infty)$ of $I,\ S,\ R^{\text{T}},\ R^{\text{P}}$ and
$E$\newline ix) $L$-increasingly smooth $I,\ S,\ R^{\text{T}},\ R^{\text{P}}$
and $E$. \newline i) \textbf{Continuity on }$(-\infty,\varepsilon)$\textbf{ of
}$I$: Remember that%
\[
\frac{dI}{dt}=\int_{\mathbb{R_{+}}}\beta(t-\tau)I(t-\tau)S(t-\tau)\Psi
(\tau)d\tau-\gamma I(t)-\mu I(t).
\]
We define
\begin{equation}
g(t)=\int_{\varepsilon}^{L}\beta(t-\tau)I(t-\tau)S(t-\tau)\Psi(\tau)d\tau.
\label{eq:g(t)}%
\end{equation}
For $0<t<\varepsilon,$%
\[
g(t)=\int_{\varepsilon}^{L}\beta(t-\tau)c_{I}c_{S}\Psi(\tau)d\tau,
\]
From Corollary \ref{the:conAnddiff}, the function $g(t)$ is smooth and
non-negative. Thus%
\begin{equation}
\frac{dI}{dt}=g(t)-(\gamma+\mu)I(t),\nonumber
\end{equation}

From the above equation, we get%
\begin{equation}
I(t)=\left(  c_{I}+\int_{0}^{t}g(\zeta)e^{(\gamma+\mu)\zeta}d\zeta\right)
e^{-(\gamma+\mu)t}. \label{eq:I(t)}%
\end{equation}
Then, we get%
\[
\lim_{t\searrow0}I(t)=c_{I}.
\]
For $-\infty\leq t\leq0,$ $I(t)=c_{I},$ \newline Thus, $I(t)$ is the
continuous function for $-\infty<t<\varepsilon$.\newline Note that,%
\begin{equation}
\lim_{t\searrow0}I^{^{\prime}}(t)\neq\lim_{t\nearrow0}I^{^{\prime}}(t).
\label{eq:DiffI(0)}%
\end{equation}
We note that the solution $I$ to System
\ref{model:mainGenNoBirthOrNaturalDeath} satisfy $I\in C^{\infty
}((0,\varepsilon))\cap C((-\infty,\varepsilon))$.\newline ii)
\textbf{Continuity on }$(-\infty,\varepsilon)$\textbf{ of }$S$: Remember that%
\[
\frac{dS}{dt}=-\beta(t)I(t)S(t)+p\gamma\int_{\mathbb{R_{+}}}I(t-\rho)\Phi
(\rho)d\rho.
\]
\newline For $0<t<\varepsilon$, we evaluate%
\begin{align*}
&  p\gamma\int_{\mathbb{\varepsilon}}^{M}I(t-\rho)\Phi(\rho)d\rho,\\
&  =p\gamma c_{I}\int_{\varepsilon}^{M}\Psi(\rho)d\rho,\\
&  =p\gamma c_{I}.
\end{align*}
Thus,%
\begin{equation}
\frac{dS}{dt}=-\beta(t)I(t)S(t)+p\gamma c_{I}. \label{eq:diffSFI}%
\end{equation}
We rewrite the above equation, then%
\begin{align}
\left(  e^{\int_{0}^{t}\beta(s)I(s)ds}S(t)\right)  ^{\prime}  &  =p\gamma
c_{I}e^{\int_{0}^{t}\beta(s)I(s)ds},\\
S(t)  &  =\frac{1}{e^{\int_{0}^{t}\beta(s)I(s)ds}}\left(  S(0)+\int_{0}%
^{t}p\gamma c_{I}e^{\int_{0}^{\zeta}\beta(s)I(s)ds}d\zeta\right)  .
\label{eq:SFrom0ToEp}%
\end{align}
\newline Then, we get%
\begin{align*}
\lim_{t\searrow0}S(t)  &  =\lim_{t\searrow0}\left(  \frac{1}{e^{\int_{0}%
^{t}\beta(s)I(s)ds}}\left(  S(0)+\int_{0}^{t}p\gamma c_{I}e^{\int_{0}^{\zeta
}\beta(s)I(s)ds}d\zeta\right)  \right)  ,\\
&  =S(0)=c_{S}.
\end{align*}
For $-\infty\leq t\leq0,$ $S(t)=c_{S}.$\newline Then%
\begin{equation}
\lim_{t\searrow0}S(t)=c_{S}=\lim_{t\nearrow0}S(t). \label{eq:ContS0}%
\end{equation}
Thus $S$ is a continuous function for $-\infty<t<\varepsilon.$\newline Note
that%
\[
\lim_{t\nearrow0}S^{\prime}(t)\neq\lim_{t\searrow0}S^{\prime}(t).
\]
\newline We note that the solution $S$ to System
\ref{model:mainGenNoBirthOrNaturalDeath} satisfy $S\in C^{\infty
}((0,\varepsilon))\cap C((-\infty,\varepsilon))$.\newline iii)--v)
\textbf{Differentiability on }$(-\infty,\varepsilon)$\textbf{ of }%
$R^{\text{T}}$, $R^{\text{P}}$, and $E$: Recall that%
\begin{equation}
\frac{dR^{\text{T}}(t)}{dt}=p\gamma I(t)-p\gamma\int_{0}^{\infty}I(t-\rho
)\Phi(\rho)d\rho. \label{eq:dittRT}%
\end{equation}
For $-\infty<t<\varepsilon,$%
\[
\frac{dR^{\text{T}}(t)}{dt}=p\gamma(I(t)-c_{I}).
\]
Thus $R^{\text{T}}(t)\in C^{1}((-\infty,\varepsilon))$ since $I\in
C((-\infty,\varepsilon))$. \newline The same results, i.e., $R^{\text{T}%
}(t),E(t)\in C^{1}((-\infty,\varepsilon))$, are proved in an analougous
way.\newline vi) \textbf{Differentialbility on }$(0,2\varepsilon)$\textbf{ of}
$I$: Again, we recall%
\begin{equation}
\frac{dI}{dt}=\int_{\mathbb{R_{+}}}\beta(t-\tau)I(t-\tau)S(t-\tau)\Psi
(\tau)d\tau-\gamma I(t)-\mu I(t). \label{eq:IsmoothZeroToTwoEpsilon}%
\end{equation}
By using Equation \eqref{eq:g(t)}, we have%
\begin{align*}
g(t)=  &  \int_{\varepsilon}^{L}\beta(t-\tau)I(t-\tau)S(t-\tau)\Psi(\tau
)d\tau,\\
=  &  \underbrace{\int_{\varepsilon}^{2\varepsilon}\beta(t-\tau)I(t-\tau
)S(t-\tau)\Psi(\tau)d\tau}_{=g_{1}(t)}\\
&  +\underbrace{\int_{2\varepsilon}^{L}\beta(t-\tau)I(t-\tau)S(t-\tau
)\Psi(\tau)d\tau.}_{=g_{2}(t)}%
\end{align*}
\textbf{Remark}: We have assumed that $2\varepsilon<L$, if this is not the
case, one can set $g_{2}(t)\equiv0$ for the remainder of the proof, this. In
similar situations below, we adopt the same argument.\newline For
$\varepsilon<t<2\varepsilon,$%
\begin{align*}
g_{2}(t)  &  =\int_{2\varepsilon}^{L}\beta(t-\tau)I(t-\tau)S(t-\tau)\Psi
(\tau)d\tau,\\
&  =\int_{2\varepsilon}^{L}\beta(t-\tau)c_{I}c_{S}\Psi(\tau)d\tau,\\
&  =c_{I}c_{S}\int_{2\varepsilon}^{L}\beta(t-\tau)\Psi(\tau)d\tau.
\end{align*}
Furthermore,%
\begin{align*}
g_{1}(t)=  &  \int_{\varepsilon}^{2\varepsilon}\beta(t-\tau)I(t-\tau
)S(t-\tau)\Psi(\tau)d\tau,\\
=  &  \int_{\varepsilon}^{t}\beta(t-\tau)I(t-\tau)S(t-\tau)\Psi(\tau)d\tau\\
&  +\int_{t}^{2\varepsilon}\beta(t-\tau)I(t-\tau)S(t-\tau)\Psi(\tau)d\tau.
\end{align*}
By using the given history data, we get%
\begin{align*}
g_{1}(t)=  &  \int_{\varepsilon}^{t}\beta(t-\tau)I(t-\tau)S(t-\tau)\Psi
(\tau)d\tau+\int_{t}^{2\varepsilon}\beta(t-\tau)c_{I}c_{S}\Psi(\tau)d\tau,\\
=  &  \int_{\varepsilon}^{t}\beta(t-\tau)I(t-\tau)S(t-\tau)\Psi(\tau
)d\tau+c_{I}c_{S}\int_{t}^{2\varepsilon}\beta(t-\tau)\Psi(\tau)d\tau.
\end{align*}
By adding $g_{1}$ and $g_{2}$, it gives%
\begin{align*}
g(t)=  &  g_{1}(t)+g_{2}(t),\\
=  &  \int_{\varepsilon}^{t}\beta(t-\tau)I(t-\tau)S(t-\tau)\Psi(\tau
)d\tau+c_{I}c_{S}\int_{t}^{2\varepsilon}\beta(t-\tau)\Psi(\tau)d\tau\\
&  +c_{I}c_{S}\int_{2\varepsilon}^{L}\beta(t-\tau)\Psi(\tau)d\tau,\\
=  &  \int_{\varepsilon}^{t}\beta(t-\tau)I(t-\tau)S(t-\tau)\Psi(\tau
)d\tau+c_{I}c_{S}\int_{t}^{L}\beta(t-\tau)\Psi(\tau)d\tau.
\end{align*}
Substituting this into Equation \eqref{eq:IsmoothZeroToTwoEpsilon} gives%
\begin{equation}
\frac{dI}{dt}=\int_{\varepsilon}^{t}\beta(t-\tau)I(t-\tau)S(t-\tau)\Psi
(\tau)d\tau+c_{I}c_{S}\int_{t}^{L}\beta(t-\tau)\Psi(\tau)d\tau~-\gamma
I(t)-\mu I(t). \label{eq:diffISI}%
\end{equation}
Note that $t-\tau$ fall into the interval $0<t-\tau<\varepsilon$, hence by
using the Corollary \ref{cor:continuous}, the integrals on the right hand side
are continuous functions for all $t$ and a.e. $\tau$, because $S,I,\beta\in
C^{\infty}((0,\varepsilon))$ together with the assumptions on $\Psi$. The
remaining terms are obviously continuous, hence $I\in C^{1}((\varepsilon
,2\varepsilon))$.\newline Turning our attention to%
\begin{align*}
\lim_{t\searrow\varepsilon}I^{^{\prime}}(t)  &  =\lim_{t\searrow\varepsilon
}\Big(  \int_{\varepsilon}^{t}\beta(t-\tau)I(t-\tau)S(t-\tau)\Psi(\tau
)d\tau+c_{I}c_{S}\int_{t}^{L}\beta(t-\tau)\Psi(\tau)d\tau\Big.\\
&\qquad\Big.-\gamma I(t)-\mu
I(t)\Big)  ,\\
&  =c_{I}c_{S}\int_{\varepsilon}^{L}\beta(t-\tau)\Psi(\tau)d\tau-(\gamma
+\mu)I(\varepsilon).
\end{align*}
For $0<t<\varepsilon,$by using Equation \eqref{eq:IsmoothZeroToTwoEpsilon}, we
get%
\begin{align}
\lim_{t\nearrow\varepsilon}I^{^{\prime}}(t)  &  =\lim_{t\nearrow\varepsilon
}\left(  c_{I}c_{S}\int_{t}^{L}\beta(t-\tau)\Psi(\tau)d\tau-(\gamma
+\mu)I(t)\right) \label{eq:DiffIAtEp}\\
&  =c_{I}c_{S}\int_{\varepsilon}^{L}\beta(t-\tau)\Psi(\tau)d\tau-(\gamma
+\mu)I(\varepsilon)=\lim_{t\searrow\varepsilon}I^{^{\prime}}(t).
\end{align}
Thus $I$ $\in$ $C^{1}((0,2\varepsilon))$.\newline vii)
\textbf{Differentialbility on }$(0,2\varepsilon)$\textbf{ of }$S$:%
\[
\frac{dS}{dt}=-\beta(t)I(t)S(t)+p\gamma\int_{\mathbb{R_{+}}}I(t-\rho)\Phi
(\rho)d\rho.
\]
For $\varepsilon<t<2\varepsilon,$%
\begin{align*}
\frac{dS}{dt}  &  =-\beta(t)I(t)S(t)+p\gamma\int_{\mathbb{\varepsilon}}%
^{M}I(t-\rho)\Phi(\rho)d\rho,\\
&  =-\beta(t)I(t)S(t)+p\gamma\int_{\mathbb{\varepsilon}}^{t}I(t-\rho)\Phi
(\rho)d\rho+p\gamma\int_{t}^{M}I(t-\rho)\Phi(\rho)d\rho,\\
&  =-\beta(t)I(t)S(t)+p\gamma\int_{\mathbb{\varepsilon}}^{t}I(t-\rho)\Phi
(\rho)d\rho+p\gamma c_{I}\int_{t}^{M}\Phi(\rho)d\rho.
\end{align*}
\newline Then,%
\begin{align}
\lim_{t\searrow\varepsilon}S^{^{\prime}}(t)  &  =\lim_{t\searrow\varepsilon
}\left(  -\beta(t)I(t)S(t)+p\gamma\int_{\mathbb{\varepsilon}}^{t}I(t-\rho
)\Phi(\rho)d\rho+p\gamma c_{I}\int_{t}^{M}\Phi(\rho)d\rho\right)  ,\nonumber\\
&  =-\beta(\varepsilon)I(\varepsilon)S(\varepsilon)+p\gamma c_{I}%
\int_{\varepsilon}^{M}\Phi(\rho)d\rho,\nonumber\\
&  =-\beta(\varepsilon)I(\varepsilon)S(\varepsilon)+p\gamma c_{I}.
\label{eq:DiffSAtEp}%
\end{align}
For $0<t<\varepsilon,$ by using the Equation \eqref{eq:diffSFI}, we get%
\begin{align*}
\lim_{t\nearrow\varepsilon}S^{^{\prime}}(t)  &  =\lim_{t\nearrow\varepsilon
}\left(  -\beta(t)I(t)S(t)+p\gamma c_{I}\right)  ,\\
&  =-\beta(\varepsilon)I(\varepsilon)S(\varepsilon)+p\gamma c_{I}%
=\lim_{t\searrow\varepsilon}S^{^{\prime}}(t).
\end{align*}
Thus $S\in C^{1}((0,2\varepsilon))$.\newline We conclude that $R^{T}\in
C^{2}((0,2\varepsilon)),R^{P}\in C^{2}((0,2\varepsilon))$ and $E\in
C^{2}((0,2\varepsilon))$ since $S\in C^{1}((0,2\varepsilon)).$and $I\in
C^{1}((0,2\varepsilon)).$\newline viii) \textbf{Differentiability on
}$(0,\infty)$\textbf{ of }$I,S,R^{\text{T}},R^{\text{P}}$\textbf{ and }%
$E$:\newline We now prove that $I\in C^{1}((0,\infty))$ by using mathematical
induction. First, we have proved that $S,I\in C^{1}((0,2\varepsilon)$. We
assume that $S,~I\in C^{1}((0,(k+1)\varepsilon))$ is true, for a fixed integer
$k\geq1$. We will now prove that $I\in C^{1}((0,(k+2)\varepsilon))$,is true,
that is, $I^{^{\prime}}\in C((0,(k+2)\varepsilon)).$ \newline We will have two
case which are $\varepsilon<t<L$ and $t>L$ for $0<t<(k+2)\varepsilon.$\newline
For $\varepsilon<t<L<(k+2)\varepsilon,$\newline From Equation
\eqref{eq:Idynamic}, we have
\begin{align*}
\frac{dI}{dt}=  &  \int_{\mathbb{\varepsilon}}^{L}\beta(t-\tau)I(t-\tau
)S(t-\tau)\Psi(\tau)d\tau-\gamma I(t)-\mu I(t),\\
=  &  \int_{\mathbb{\varepsilon}}^{t}\beta(t-\tau)I(t-\tau)S(t-\tau)\Psi
(\tau)d\tau\\
&  +\int_{t}^{L}\beta(t-\tau)I(t-\tau)S(t-\tau)\Psi(\tau)d\tau-(\gamma
+\mu)I(t).
\end{align*}
By using the given history data, we get%
\[
\frac{dI}{dt}=\int_{\mathbb{\varepsilon}}^{t}\beta(t-\tau)I(t-\tau
)S(t-\tau)\Psi(\tau)d\tau+\int_{t}^{L}\beta(t-\tau)c_{I}c_{S}\Psi(\tau
)d\tau-(\gamma+\mu)I(t).
\]
\newline Since $\varepsilon-\tau<t-\tau<(k+2)\varepsilon-\varepsilon
=(k+1)\varepsilon,$ the continuity of $I^{^{\prime}}$ follows from assumption
for all $0<t<(k+2)\varepsilon$ and a.e. $\tau$ by using the Corollary
\ref{cor:continuous}.\newline For $L<t<(k+2)\varepsilon,$\newline From
Equation \eqref{eq:Idynamic}, we have
\begin{equation}
\frac{dI}{dt}=\int_{\mathbb{\varepsilon}}^{L}\beta(t-\tau)I(t-\tau
)S(t-\tau)\Psi(\tau)d\tau-\gamma I(t)-\mu I(t). \label{eq:derivative_I}%
\end{equation}
Since $t-\tau\leq t-\varepsilon<(k+2)\varepsilon-\varepsilon=(k+1)\varepsilon
$, the continuity of $I^{^{\prime}}$ follows from the assumption for all
$L<t<(k+2)\varepsilon$ and a.e. $\tau$ by applying the Corollary
\ref{cor:continuous}. Therefore $I\in C^{1}((0,(k+2)\varepsilon))$.\newline
Now let's prove that $S\in C^{1}((0,(k+2)\varepsilon))$, that is $S^{^{\prime
}}\in C((0,(k+2)\varepsilon)).$ \newline Let $0<t<(k+2)\varepsilon$ and let
$g(t)=p\gamma\int_{\mathbb{R_{+}}}I(t-\rho)\Phi(\rho)d\rho$. From above, we
have $I\in C^{1}((0,(k+2)\varepsilon)).$\newline Now\newline%
\begin{equation}
\frac{dS}{dt}=-\beta(t)I(t)S(t)+g(t). \label{eq:diffOfS}%
\end{equation}
Therefore $S\in C^{1}((0,(k+2)\varepsilon))$ since $S$ depend on only $I$ by
multiplying with the integration factor for both sides of above
equations.\newline We have proved that $S,I\in C^{1}((0,(k+2)\varepsilon)).$
By mathematical induction, we get $S,I\in C^{1}(0,\infty).$\newline We
conclude that $R^{T},R^{P}$ and $E$ are in $C^{2}((0,\infty))$ since they only
depend on $S$ and $I$.\newline ix) $L$\textbf{-increasingly smooth}
$I,S,R^{\text{T}},R^{\text{P}}$\textbf{ and }$E$:\newline The progressive
smoothness of $I$ will be proved below, again using mathematical induction. As
a first step, we have proved that $S,I\in C^{1}((0,\infty)).$ We assume for
$n=k$ that $S,I\in C^{k}(((k-1)L,\infty))$ is true, and proceed below by
proving that $I\in C^{k+1}((kL,\infty))$.\newline Assume $t>kL$. We have
$S,I\in C^{k}(((k-1)L,\infty))$ by assumption. Differentiating Equation
\eqref{eq:derivative_I} gives%
\[
I^{(k+1)}(t)=\frac{d^{(k)}}{dt^{(k)}}\left(  \int_{\mathbb{\varepsilon}}%
^{L}\beta(t-\tau)I(t-\tau)S(t-\tau)\Psi(\tau)d\tau-\gamma I(t)-\mu
I(t)\right)  .
\]
\newline For a.e. $\tau$, the k$^{th}$ derivative w.r.t. $t$ of $\beta
(t-\tau)I(t-\tau)S(t-\tau)\Psi(\tau)$ is continuous for any $t>kL$ since
$t-\tau\geq t-L>kL-L=(k-1)L,$ that is, $t-\tau>(k-1)L$. Thus, by Theorem
\ref{the:conAnddiff}, we get\newline%
\begin{equation}
I^{(k+1)}(t)=\left(  \int_{\mathbb{\varepsilon}}^{L}\frac{d^{k}}{dt^{k}%
}\left(  \beta(t-\tau)I(t-\tau)S(t-\tau)\right)  \Psi(\tau)d\tau-(\gamma
+\mu)I^{(k)}(t)\right)  . \label{eq:diffkOfI}%
\end{equation}
\newline It follows that $I^{k+1}(t)$ is continuous when $t>kL$ since the
integrand of the Equation \eqref{eq:diffkOfI} is a continuous function for
almost everywhere $\tau\in\lbrack\varepsilon,L]$ and all $t>kL$ by Theorem
\ref{the:conAnddiff}.\newline That is $I\in C^{k+1}((kL,\infty))$. Thus, $I\in
C^{n+1}((nL,\infty))$ is valid for any $n=0,1,...$ by mathematical
induction.\newline By definition, $I$ is $L$-\emph{increasingly smooth}%
.\newline Similarly we prove that $S$ is $L$-\emph{increasingly smooth} since
$S$ depends on only $I$ by multiplying with the integration factor for both
sides of Equation \eqref{eq:diffOfS}.\newline According to System
\ref{model:mainGenNoBirthOrNaturalDeath}, $E,$ $R^{T},R^{P}$ and $D$ are also
$L$-\emph{increasingly smooth }since they only depend on $S$ and $I$.
\end{proof}

\section{Numerical simulation\label{sec:numericalSimulation}}

In this section, we introduce a discrete time delay endemic version of System
\ref{model:mainGenNoBirthOrNaturalDeath}. An investigation of how the solution
of this discrete time delay model approximate the exact solution of System
\ref{model:mainGenNoBirthOrNaturalDeath}. We investigate the solution
behaviour for \emph{single, few }and\emph{ multiple discrete time delays}. The
exact solution and numerical approximation algorithms are described below.

\begin{table}[h]
\centering%
\begin{tabular}
[c]{|llll|}\hline
Symbol & Value & Unit & Interpretation\\\hline
$\gamma$ & 0.1$(1/\hat{\gamma})$ & $\text{day}^{-1}$ & recovery rate
(1/duration of sickness)\\\hline
$\beta$ & 0.5/N & $\text{day}^{-1}$ & contact rate\\\hline
$I_{FR}$ & 0.475 & $\text{day}^{-1}$ & infection fatality risk\\\hline
$p$ & 0.9 & $-$ & proportional immunity parameter\\\hline
$\tau$ & 5-15 & $\text{day}^{-1}$ & latent time\\\hline
$\rho$ & 200-250 & $\text{day}^{-1}$ & duration of temporary immunity\\\hline
\end{tabular}
\caption{Description of model parameters. The values fall within the interval
for Ebola, see Table 2 in \cite{disease2024}.}%
\label{tableVar}%
\end{table}

\subsection{Discrete lag endemic models}

The endemic discrete time delay model is defined as%
\begin{align}
S^{\prime}(t)  &  =-\beta(t)I(t)S(t)+p\gamma\sum_{i=1}^{N_{\rho}}\omega
_{i}^{\rho}I(t-\rho_{i}),\nonumber\\
E^{\prime}(t)  &  =\beta(t)I(t)S(t)-\sum_{j=1}^{N_{\tau}}\omega_{j}^{\tau
}\beta(t-\tau_{j})I(t-\tau_{j})S(t-\tau_{j}),\nonumber\\
I^{\prime}(t)  &  =\sum_{j=1}^{N_{\tau}}\omega_{j}^{\tau}\beta(t-\tau
_{j})I(t-\tau_{j})S(t-\tau_{j})-\gamma I(t)-\mu I(t),\label{system:Discrete}\\
R^{\text{T}^{\prime}}(t)  &  =p\gamma I(t)-p\gamma\sum_{i=1}^{N_{\rho}}%
\omega_{i}^{\rho}I(t-\rho_{i}),\nonumber\\
R^{\text{P}^{\prime}}(t)  &  =(1-p)\gamma I(t),\nonumber\\
D^{\prime}(t)  &  =\mu I(t).\nonumber
\end{align}
where the non-negative weights satisfies $\sum_{j=1}^{N_{\tau}}\omega
_{j}^{\tau}=\sum_{i=1}^{N_{\rho}}\omega_{i}^{\rho}=1$. In this article, we
will call the above system as the \emph{discrete }$(N_{\tau},N_{\rho})$\emph{
model}. This above system has been studied for the case $N_{\rho}=1$ and
$N_{\tau}=1$ in \cite{disease2024}. An idea of how this discrete $(N_{\tau
},N_{\rho})$ model may be viewed as discretization of System
\ref{model:mainGenNoBirthOrNaturalDeath} is outlined as follows: Choose
$\omega_{i}^{\rho}$ and $\omega_{j}^{\tau}$ appropriately, such that%
\[
\sum_{i=1}^{N_{\rho}}\omega_{i}^{\rho}I(t-\rho_{i})\underset{N_{\rho
}\rightarrow\infty}{\longmapsto}\int_{\mathbb{R_{+}}}I(t-\rho)\Phi
(\rho)d\lambda(\rho),
\]
and%
\[
\sum_{j=1}^{N_{\tau}}\omega_{j}^{\tau}\beta(t-\tau_{j})I(t-\tau_{j}%
)S(t-\tau_{j})\underset{N_{\rho
}\rightarrow\infty}\longmapsto\int_{\mathbb{R_{+}}}\beta(t-\tau)I(t-\tau
)S(t-\tau)\Psi(\tau)d\lambda(\tau).
\]
Moreover, the above sum can be viewed as the integrals using sums of dirac
measures, i.e.,%
\[
\sum_{i=1}^{N_{\rho}}\omega_{i}^{\rho}I(t-\rho_{i})=\int_{\mathbb{R_{+}}%
}I(t-\rho)d\nu(\rho),
\]
where $d\nu(\rho)=\sum_{i=1}^{N}\omega_{i}^{\rho}\delta(\rho-\rho_{i})$, here
$\delta$ is the Dirac measure concentrated at $\rho-\rho_{i}$.\newline To
motivate these arguments, we continue with a comparison of numerical solution
to discrete $(N_{\tau},N_{\rho})$ models (see System \ref{system:Discrete})
and System \ref{model:mainGenNoBirthOrNaturalDeath}.

\subsection{Kernel functions in simulations\label{sec:kernelAnalysis}}

In this chapter the continuous integral kernels of System
\ref{model:mainGenNoBirthOrNaturalDeath} and the discrete kernels of System
\ref{system:Discrete} are defined for utilization in the simulations.
Specifically, we employ the two triangle shaped kernels as follows:%
\begin{equation}
\Phi(\rho)=\left\{
\begin{array}
[c]{cc}%
\frac{1}{25^{2}}(\rho-200) & 200<\rho\leq225,\\
\frac{1}{25^{2}}(250-\rho) & 225<\rho<250,\\
0 & \text{elsewhere.}%
\end{array}
\right.  \label{eq:conKernelRho}%
\end{equation}
and
\begin{equation}
\Psi(\tau)=\left\{
\begin{array}
[c]{cc}%
\frac{1}{25}(\tau-5) & 5<\tau\leq10,\\
\frac{1}{25}(15-\tau) & 10<\tau<15,\\
0 & \text{elsewhere.}%
\end{array}
\right.  \label{eq:conKernelTau}%
\end{equation}
That is, the latency time is in the range $5\ $to $15$ days and the temporary
time of immunity is in the range $200\ $to $250$ days. These kernel functions
fall within the parameter values for Ebola, see Table 2 in \cite{disease2024}%
.\newline In the discrete model, System \ref{system:Discrete} is simulated
with three different settings of discrete time delays as follows:

\begin{enumerate}
\item \emph{Discrete }$(1,1)$ \emph{model (Single lag)}: With $N_{\tau
}=N_{\rho}=1$, the single time delays are $\tau=10$ and $\rho=225$, and
$\omega_{1}^{\tau}=\omega_{1}^{\rho}=1$.

\item \emph{Discrete }$(3,3)$ \emph{model (Few lags)}: With $N_{\tau}=N_{\rho
}=3$, resulting in few time delays set to be $\tau_{j}=5j$ and $\rho
_{j}=190+10j$, and $\omega^{\tau}=\omega^{\rho}=\frac{1}{4}(1,2,1)$, where
$j=1,2,3$.

\item \emph{Discrete }$(60,60)$ \emph{model (Multiple lags)}: With $N_{\tau
}=N_{\rho}=60$, resulting in multiple time delays set to be $\tau_{j}%
=5+j\cdot10/59$ and $\rho_{j}=200+j\cdot50/59$, with $\omega^{\tau}%
=\omega^{\rho}=\frac{1}{930}(1,2,...,29,30,30,29,...,2,1)$, where
$j=0,1,...,59$.
\end{enumerate}

\subsection{Exact solution of System 1\label{sec:exactSol}}

Investigating the proof theorem \ref{the:smoothness} of increasingly smooth
solution to System \ref{model:mainGenNoBirthOrNaturalDeath} an algorithm is
devised to find the exact solution on consecutive time windows. In this
chapter we write out this algorithm to suit our numerical example in which we
use ODE45 in matlab for a numerical solution. We divide System
\ref{model:mainGenNoBirthOrNaturalDeath}, into one system of ODEs and two well
defined integrals, as%

\begin{align}
S^{\prime}(t)  &  =-\beta(t)I(t)S(t)+p\gamma g(t),\nonumber\\
E^{\prime}(t)  &  =\beta(t)I(t)S(t)-h(t),\nonumber\\
I^{\prime}(t)  &  =h(t)-\gamma I(t)-\mu I(t),\label{eq:system1WithKernels}\\
R^{\text{T}^{\prime}}(t)  &  =p\gamma I(t)-p\gamma g(t),\nonumber\\
R^{\text{P}^{\prime}}(t)  &  =(1-p)\gamma I(t),\nonumber\\
D^{\prime}(t)  &  =\mu I(t),\nonumber
\end{align}
where
\begin{align}
g(t)  &  =\int_{200}^{250}I(t-\rho)\Phi(\rho)d\rho,\nonumber\\
&  =\frac{1}{25^{2}}\int_{200}^{225}I(t-\rho)(\rho-200)d\rho+\frac{1}{25^{2}%
}\int_{200}^{225}I(t-\rho)(250-\rho)d\rho, \label{eqn:Integral_g}%
\end{align}
and
\begin{align}
h(t)  &  =\int_{5}^{15}\beta(t-\tau)I(t-\tau)S(t-\tau)\Psi(\tau)d\tau
,\nonumber\\
&  =\frac{1}{25}\int_{5}^{10}\beta(t-\tau)I(t-\tau)S(t-\tau)(\tau
-5)d\tau\nonumber\\
&  +\frac{1}{25}\int_{10}^{15}\beta(t-\tau)I(t-\tau)S(t-\tau)(15-\tau)d\tau.
\label{eqn:Integral_h}%
\end{align}
The algorithm to find the exact solution is as follows:

\begin{enumerate}
\item Set $k=0$ for the initial condition

\item Solve for $g$ and $h$ for $k\leq t\leq k+5$ by using the history data of
$S$ and $I$

\item Solve System \ref{model:mainGenNoBirthOrNaturalDeath} by using the
information from step 1 and 2 for $k=k+5.$

\item goto 2)
\end{enumerate}

In Section \ref{sec:simulation}, this algorithm is used to find a numerical
solution of System \ref{eq:system1WithKernels} by the help of the ODE45 solver
in MATLAB in step 3. This solution is then compared with the numerical
solution of System \ref{system:Discrete} of the \emph{discrete }$(N_{\tau
},N_{\rho})$\emph{ models.} The discrete models, given by System
\ref{system:Discrete} are solved using the DDE23 solver in MATLAB.

\subsection{Simulation results\label{sec:simulation}}

In this section, we compare the solution of System \ref{eq:system1WithKernels}
and System \ref{system:Discrete}. The parameter values are given in Table
\ref{tableVar} and the kernels are given by \eqref{eq:conKernelRho} and
\eqref{eq:conKernelTau}. The initial total population is set to be 10 million
and the number of initially infected individuals is assigned to be 10. The
initial number of permanent recovered individuals and the initial disease
deaths are set to zero. Additionally, the equation \eqref{eq:Es}--\eqref{eq:RPs}
are used to determine the initial number of individuals in exposed stage,
temporary and permanent recovery stages, respectively. With the help of these
details, the initial susceptible individuals is calculated by using Equation
\eqref{eq:total}. The disease death rate, $\mu$, is calculated by%
\[
\mu=\frac{\gamma I_{\text{FR}}}{1-I_{\text{FR}}},
\]
where the detailed proof can be found in \cite{disease2024}.\newline In the
following illustrations, we simulate four distinct types of compartmental
population detailed in Section \ref{sec:kernelAnalysis} over a period of 10
years. Across all figures, a clear observation emerges---namely, that the
solution of the \emph{discrete }$(N_{\tau},N_{\rho})$\emph{ models} aligns
better and better with the solution of the continuous model when the number of
delays is increased.%

\begin{figure}[H]
\centering
\includegraphics[width=5.8cm, height=5cm]{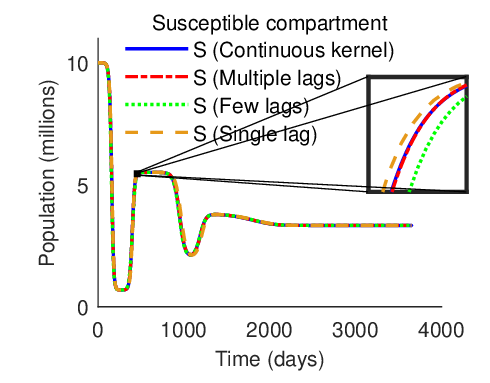}
\caption{The number of susceptible individuals is represented for four different models
under 10 years. The blue solid curve uses continuous time delay kernel
functions. The red dotted curve is simulated utilizing discrete (60,60) model,
the green dotted curve is plotted with discrete (3,3) model, and the yellow
dashed curve is produced with the discrete (1,1) model.}\label{fig:susceptible}
\end{figure}

\begin{figure}[H]
\centering
\includegraphics[width=5.8cm, height=5cm]{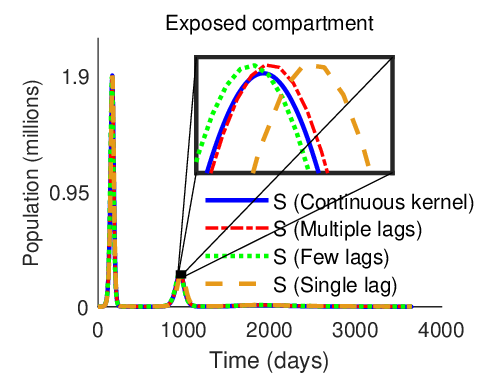}
\caption{The number of exposed individuals is represented for four different models
under 10 years. The blue solid curve uses continuous time delay kernel
functions. The red dotted curve is simulated utilizing discrete (60,60) model,
the green dotted curve is plotted with discrete (3,3) model, and the yellow
dashed curve is produced with the discrete (1,1) model.}\label{fig:exposed}
\end{figure}

\begin{figure}[H]
\centering
\includegraphics[width=5.8cm, height=5cm]{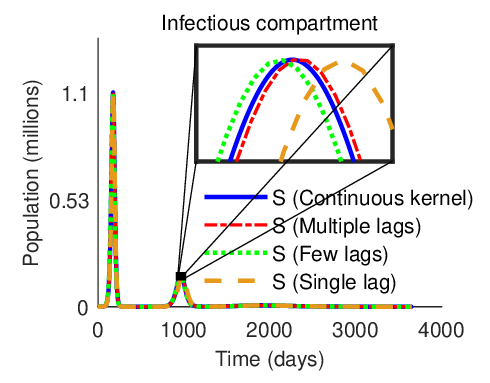}
\caption{
The number of infectious individuals is represented for four different models
under 10 years. The blue solid curve uses continuous time delay kernel
functions. The red dotted curve is simulated utilizing discrete (60,60) model,
the green dotted curve is plotted with discrete (3,3) model, and the yellow
dashed curve is produced with the discrete (1,1) model.}\label{fig:infected}
\end{figure}

\begin{figure}[H]
\centering
\includegraphics[width=5.8cm, height=5cm]{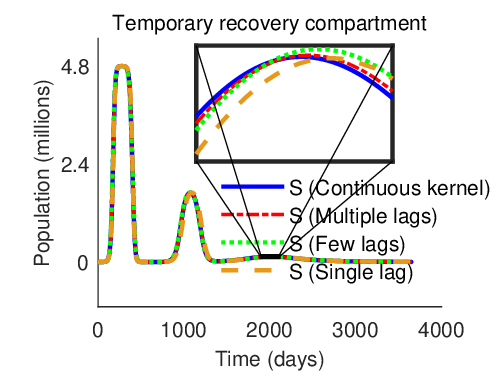}
\caption{The number of temporary recovery individuals is represented for four different
models under 10 years. The blue solid curve uses continuous time delay kernel
functions. The red dotted curve is simulated utilizing discrete (60,60) model,
the green dotted curve is plotted with discrete (3,3) model, and the yellow
dashed curve is produced with the discrete (1,1) model.
}\label{fig:temporary}
\end{figure}

\begin{figure}[H]
\centering
\includegraphics[width=5.8cm, height=5cm]{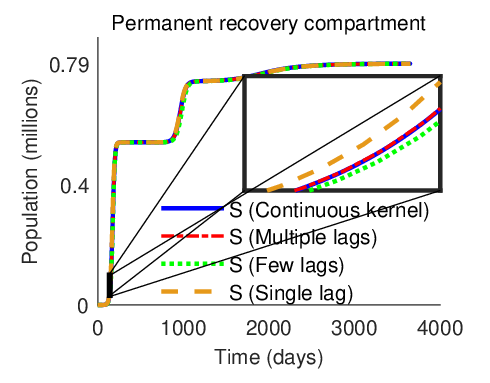}
\caption{The number of permanent recovery individuals is represented for four different
models under 10 years. The blue solid curve uses continuous time delay kernel
functions. The red dotted curve is simulated utilizing discrete (60,60) model,
the green dotted curve is plotted with discrete (3,3) model, and the yellow
dashed curve is produced with the discrete (1,1) model.}\label{fig:permanent}
\end{figure}

\begin{figure}[H]
\centering
\includegraphics[width=5.8cm, height=5cm]{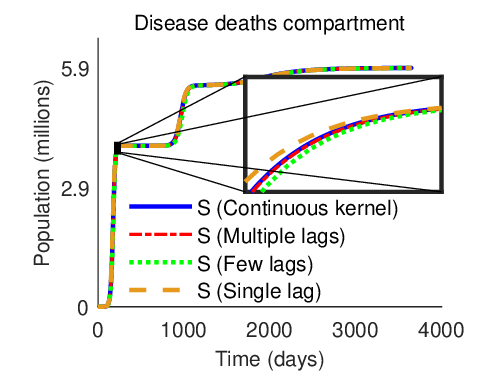}
\caption{The number of disease death individuals is represented for four different
models under 10 years. The blue solid curve uses continuous time delay kernel
functions. The red dotted curve is simulated utilizing discrete (60,60) model,
the green dotted curve is plotted with discrete (3,3) model, and the yellow
dashed curve is produced with the discrete (1,1) model.}\label{fig:dead}
\end{figure}

\section{Final remarks}

We propose an endemic model to describe the dynamics of an infectious disease,
incorporated with $L^{1}$ probability density delay kernel functions for the
latency and time lag of temporary immunity. The model is formulated as a
system of delay differential equations, and we ensure that System
\ref{model:mainGenNoBirthOrNaturalDeath} is well-posed, meaning that its
solution is non-negative and bounded. The solutions are also proved to possess
the property of being increasingly smooth over time.\newline The endemic
discrete time lag model is also proposed to approximate the exact solution of
System \ref{eq:system1WithKernels}. The simulation results indicate that the
\emph{discrete }$(N_{\tau},N_{\rho})$\emph{ model, }System
\ref{system:Discrete} converges to the solution of System
\ref{eq:system1WithKernels}.

In this paper, we discuss the possibility of viewing the discrete models using
kernels as integrals over Dirac measures. A future work will be designated to
a concrete proof that the numerical solutions of the discrete models, System
\ref{system:Discrete} converges to the exact solution of System
\ref{eq:system1WithKernels}.

\section{Acknowledgements}

This research was supported by Scholars at Risk and M\"{a}lardalen University
with funding from M\"{a}lardalen University and Formas (Dnr: GU 2022/1417).

\bibliographystyle{plain}
\bibliography{mybib}

\section{Appendix}

\subsection{Proof of Theorem \ref{Thm:Non-negativity}\label{app:nonNegativity}%
}

The following proof shows that the non-negativity of $S,$ $R^{\text{T}}$ and
$R^{\text{P}}.$\newline\textbf{Positivity of }$S$:\newline Consider $\frac
{dS}{dt}=-\beta(t)I(t)S(t)+p\gamma\int_{\mathbb{R_{+}}}I(t-\rho)\Phi
(\rho)d\rho.$\newline For simplicity $\tau<\rho$, from which it follows that
$I(t-\rho)$ is positive for all $\rho\in\mathbb{R_{+}}$ by the above proof in
\ref{sec:positivityAndBoundedness}, if $\rho<\tau$ one proves positivity of
$S$ first then the positivity of $I$. The above equation therefore imply
\[
\frac{dS}{dt}\geq-\beta(t)I(t)S(t).
\]
By multiplying with the integrating factors for both sides, we get
\[
\frac{dS}{dt}e^{\int_{0}^{t}\beta(t)I(t)ds}+\beta(t)I(t)S(t)e^{\int_{0}%
^{t}\beta(t)I(t)ds}\geq0.
\]
or equivalently%
\[
\frac{d}{dt}\left(  S(t)e^{\int_{0}^{t}\beta(t)I(t)ds}\right)  \geq0.
\]
Integrate both sides:
\[
S(t)\geq S(0)e^{-\int_{0}^{t}\beta(t)I(t)ds}>0.
\]
Thus, it is clear that the solution $S(t)$ is positive since the initial value
$S(0)$ and the exponential functions are always positive. $\mathbb{\newline}%
$Thus the nonnegativity of $S(t)$ and $I(t)$ are proved for all $t\geq
-\tau>-\rho.$\newline\textbf{Positivity of }$R^{T}$:\newline From the given
System \ref{model:mainGenNoBirthOrNaturalDeath}, we get $R^{\text{T}}(t)=0$,
for all $t$, if the parameter $p=0$.\newline For a fixed $p\in(0,1]$. Recall
that $\operatorname*{supp}(\Phi)$ is a subset of $[\epsilon,M]$.\newline Now%
\begin{align*}
\frac{dR^{\text{T}}(t)}{dt}  &  =p\gamma I(t)-p\gamma\int_{0}^{\infty}%
I(t-\rho)\Phi(\rho)d\rho,\\
&  =p\gamma\left[  I(t)-\int_{\epsilon}^{M}I(t-\rho)\Phi(\rho)d\rho\right]  .
\end{align*}
By integrating both sides, we get
\[
R^{\text{T}}(t)=R^{\text{T}}(0)+p\gamma\int_{0}^{t}\int_{\epsilon}^{M}\left(
I(s)-I(s-\rho)\right)  \Phi(\rho)d\rho ds.
\]
The Fubini's theorem now gives%
\begin{align*}
R^{\text{T}}(t)  &  =R^{\text{T}}(0)+p\gamma\int_{\epsilon}^{M}\Phi(\rho
)\int_{0}^{t}\left(  I(s)-I(s-\rho)\right)  dsd\rho,\\
&  =R^{\text{T}}(0)+p\gamma\int_{\epsilon}^{M}\Phi(\rho)\left(  \int_{0}%
^{t}I(s)ds-\int_{-\rho}^{t-\rho}I(s)ds\right)  d\rho,\\
&  =R^{\text{T}}(0)+p\gamma\int_{\epsilon}^{M}\Phi(\rho)\left(  \int_{t-\rho
}^{t}I(s)ds-\int_{-\rho}^{0}I(s)ds\right)  d\rho.
\end{align*}
Including the initial data, Equation \eqref{eq:Rs}, yields%
\begin{align*}
R^{\text{T}}(t)  &  =c_{I}p\gamma\int_{\epsilon}^{M}\Phi(\rho)\rho
d\rho+p\gamma\int_{\epsilon}^{M}\Phi(\rho)\int_{t-\rho}^{t}I(s)dsd\rho
-c_{I}p\gamma\int_{\epsilon}^{M}\Phi(\rho)\rho d\rho,\\
&  =p\gamma\int_{\epsilon}^{M}\int_{t-\rho}^{t}\Phi(\rho)I(s)dsd\rho\geq0.
\end{align*}
Since $I(t)$ is positive, we have proved the positivity of $R^{\text{T}}(t)$
for all $t$.\newline\textbf{Positivity of }$R^{\text{P}}$:\newline The
dynamics for $R^{\text{P}}$ is given by
\[
\frac{dR^{\text{P}}(t)}{dt}=(1-p)\gamma I(t)\geq0,
\]
hence $R^{\text{P}}(t)$ is positive for all $t$.

\subsection{Proof of Corollary \ref{cor:continuous}\label{app:proofCor}}

\begin{proof}
Let $\varepsilon>0$ be given. Pick any bounded sequence $t_{n}\rightarrow t$.
We have%
\begin{align*}
F(t)  &  =\int_{a}^{t}H(t-\rho)\Phi(\rho)d\lambda(\rho),\\
&  =\underbrace{\int_{a}^{t_{n}}H(t_{n}-\rho)\Phi(\rho)d\lambda(\rho
)}_{=F(t_{n})}\\
&  +\underbrace{\int_{a}^{t_{n}}(H(t-\rho)-H(t_{n}-\rho))\Phi(\rho
)d\lambda(\rho)}_{=I(t_{n})}+\underbrace{\int_{t_{n}}^{t}H(t-\rho)\Phi
(\rho)d\lambda(\rho)}_{II(t_{n})},
\end{align*}
Now, since
\[
b\underset{\text{def.}}{=}\sup\{t_{n}\}<\infty
\]
we get by H\"{o}lder's inequality we get%
\begin{align*}
|I(t_{n})|  &  \leq\left\Vert (H(t-.)-H(t_{n}-.))\Phi(.)\right\Vert
_{L^{1}([a,b])}\\
&  \leq\left\Vert (H(t-.)-H(t_{n}-.))\right\Vert _{L^{\infty}([a,b])}%
\left\Vert \Phi\right\Vert _{L^{1}([a,b])}\\
&  \leq C\left\Vert (H(t-.)-H(t_{n}-.))\right\Vert _{L^{\infty}([a,b])},
\end{align*}
for some non-negative constant $C<\infty$. Since $H\in C(\mathbb{R})$ it is
uniformly continuous on $[a-b,b]$ and since $t_{n}\rightarrow t$ there is an
$N_{1}$ such that
\[
\left\Vert (H(t-.)-H(t_{n}-.))\right\Vert _{L^{\infty}([a,b])}<\frac
{\varepsilon}{2C},
\]
for all $n>N_{1}$, that is%
\[
|I(t_{n})|<\varepsilon/2.
\]
Furtheremore, since $H(t-.)\Phi(.)\in L^{1}(\mathbb{R})$ and $t_{n}\rightarrow
t$ there is an $N_{2}$ such that%
\[
|II(t_{n})|\leq\left\Vert H(t-.)\Phi(.)\right\Vert _{L^{1}([t,t_{n}%
])}<\varepsilon/2
\]
for all $n>N_{2}$. That is, for any $n>\max(N_{1},N_{2})$ we get
\[
\left\vert F(t)-F(t_{n})\right\vert \leq\left\vert I(t_{n})\right\vert
+\left\vert II(t_{n})\right\vert <\varepsilon/2+\varepsilon/2=\varepsilon.
\]
We have proved that $\lim_{n\rightarrow\infty}F(t_{n})=F(t)$, and since the
sequence $\{t_{n}\}$ was arbitrary, the function $F$ is continuous at $t$.
\end{proof}

\end{document}